%
%
%

\documentclass[graybox]{svmult}


\usepackage{mathtools}       
\usepackage{mathptmx}       
\usepackage{helvet}         
\usepackage{courier}        
\usepackage{type1cm}        
%
\usepackage{makeidx}         
\usepackage{graphicx}        
\usepackage{multicol}        
\usepackage[bottom]{footmisc}

\usepackage{url}
\usepackage{amstext, amsmath, amssymb}
\usepackage{booktabs}
\usepackage{fancyvrb}
\usepackage{stmaryrd}
\usepackage{pdfsync}
\usepackage{algorithm}
\usepackage{tikz}
\usepackage{pgfplots}
\usepackage{pgfplotstable}
\usepackage{subfig}
\usepackage{tabularx}
\usepackage{todonotes}
\usepackage{verbatim}
\usepackage[numbers,square,sort&compress]{natbib}

\usepackage[plainpages=false]{hyperref}
\hypersetup{
		bookmarksopen=true,
		bookmarksnumbered=true,
		pdfborder={0 0 0},
		colorlinks=true,
		linkcolor=blue,
		urlcolor=blue	
}


\newcommand{\RR}{\mathbb{R}}

\newcommand{\mcP}{\mathcal{P}}

\newcommand{\mcV}{\mathcal{V}}
\newcommand{\mcK}{\mathcal{K}}
\newcommand{\mcE}{\mathcal{E}}

\newcommand{\mcF}{\mathcal{F}}

\newcommand{\mcA}{\mathcal{A}}

\newcommand{\mcT}{\mathcal{T}}

\newcommand{\mcO}{\mathcal{O}}

\newcommand{\tn}{|\mspace{-1mu}|\mspace{-1mu}|}

\newcommand{\jump}[1]{[#1]}
\newcommand{\mean}[1]{\langle {#1} \rangle}

\newcommand{\Gammah}{{\Gamma^h}}
\newcommand{\nablas}{\nabla_\Gamma}
\newcommand{\nablash}{\nabla_{\Gamma^h}}

\newcommand{\foralls}{\forall\,}

\newcommand{\dsh}{\,\mathrm{d} \Gamma^h}

\newcommand{\ub}{u_{\Omega}}
\newcommand{\us}{u_{\Gamma}}
\newcommand{\ns}{n_{\Gamma}}
\newcommand{\vb}{v_{\Omega}}
\newcommand{\vs}{v_{\Gamma}}
\newcommand{\wb}{w_{\Omega}}
\newcommand{\ws}{w_{\Gamma}}
\newcommand{\ks}{k_{\Gamma}}

\newcommand{\cs}{c_{\Gamma}}
\newcommand{\cb}{c_{\Omega}}
\newcommand{\fs}{f_{\Gamma}}
\newcommand{\fb}{f_{\Omega}}

\newcommand{\Vs}{V_{\Gamma}}
\newcommand{\Vb}{V_{\Omega}}
\newcommand{\as}{a_{\Gamma}}
\newcommand{\ab}{a_{\Omega}}
\newcommand{\abs}{a_{\Omega\Gamma}}
\newcommand{\As}{A_{\Gamma}}
\newcommand{\Ab}{A_{\Omega}}
\newcommand{\ls}{l_{\Gamma}}
\newcommand{\lb}{l_{\Omega}}
\newcommand{\jsh}{j_{\Gamma}^h}
\newcommand{\jbh}{j_{\Omega}^h}

\newcommand{\mcFb}{\mcF_{\Omega}}
\newcommand{\mcFbg}{\mcF_{\Omega, g}}
\newcommand{\mcFs}{\mcF_{\Gamma}}
\newcommand{\mcTb}{\mcT_{\Omega}}
\newcommand{\mcTs}{\mcT_{\Gamma}}
\newcommand{\tnb}{\tn_{h, \Omega}}
\newcommand{\tns}{\tn_{h, \Gamma}}
\newcommand{\gammab}{\gamma_{\Omega}}
\newcommand{\gammas}{\gamma_{\Gamma}}
\newcommand{\mub}{\mu_{\Omega}}
\newcommand{\mus}{\mu_{\Gamma}}
\newcommand{\taub}{\tau_{\Omega}}
\newcommand{\taus}{\tau_{\Gamma}}

\newcommand{\onehalf}{\frac{1}{2}}


\DeclareMathOperator{\spann}{span}
\DeclareMathOperator{\dist}{dist}
\DeclareMathOperator{\diam}{diam}

\DefineVerbatimEnvironment{code}{Verbatim}{frame=single,rulecolor=\color{blue}}

\newcommand\amnote[2][]{\todo[inline, caption={2do}, color=cyan!40 #1]{
\begin{minipage}{\textwidth-4pt}\underline{AM:} #2\end{minipage}}}



\makeindex             


\begin{document}

\title*{A Cut Discontinuous Galerkin Method for Coupled Bulk-Surface Problems}

\author{Andr\'e Massing}
\institute{Andr\'e Massing \at Department of Mathematics and Mathematical Statistics,
 Ume\aa{} University, SE-90187 Ume\aa{}, Sweden, \email{andre.massing@umu.se}}
%
%
\maketitle


\abstract{
  We develop a cut Discontinuous Galerkin method (cutDGM) for a
  diffusion-reaction equation in a bulk domain which is coupled to a
  corresponding equation on the boundary of the bulk domain.
  The bulk domain is embedded into a structured, unfitted background mesh.
  By adding certain stabilization terms to the discrete variational
  formulation of the coupled bulk-surface problem,
  the resulting cutDGM is provably stable and exhibits
  optimal convergence properties as demonstrated by numerical experiments.
  We also show both theoretically and numerically
  that the system matrix is well-conditioned, irrespective of
  the relative position of the bulk domain in the background mesh.
}

\section{Introduction}
\label{sec:introduction}

In recent years, the analysis and numerical solution of coupled bulk-surface
partial differential equations (PDE) have gained a large interests
in the fields of computational engineering and scientific computing.
Indeed, a number of important phenomena in biology, geology and
physics can be described by such PDE systems.
A prominent use case are flow and transport problems in porous media when
large-scale fracture networks are modeled as 2D geometries embedded into a 3D bulk
domain~\cite{MartinJaffreRoberts2005,FormaggiaFumagalliScottiEtAl2013}.
Another important example is the modeling of cell motility where
reaction-diffusion systems on the cell membrane and inner cell are coupled to describe the
active reorganization of the cytoskeleton~\cite{NovakGaoChoiEtAl2007,Raetz2015}.
Coupled bulk-surface PDEs arise also naturally when modeling
incompressible multi-phase flow problems with surfactants~\cite{GanesanTobiska2009,
  GrossReusken2011,MuradogluTryggvason2008,GrosReusken2013}.

The numerical solution of coupled bulk-surface systems
poses several challenges even for modern computational methods.
First, one faces a system of
coupled PDEs on domains of different topological dimensionality,
which needs to be accommodated by the numerical method at hand.
Second, extremely complex surface geometries naturally appear in many realistic
application scenarios, e.g., when complex fracture networks in porous media models 
are considered, and thus fast and robust mesh generation becomes a challenge.
Moreover, the simulation of complex droplet systems shows that, even if the
initial surface geometry is relatively simple, it might evolve significantly
over time and thus can undergo large or even topological changes.
For traditional discretization methods, a costly remeshing of the computational
domain is then the only resort, and the question of how
to transfer the computed solution components between different meshes
efficiently and accurately becomes an urgent and challenging matter.

As a potential remedy to these challenges, the
so-called cut finite element method (CutFEM) has gained a large interest
in recent years, see~\cite{BurmanClausHansboEtAl2014} for a review.
The basic idea is to decouple the description
of the geometry as much as possible from the underlying approximation spaces
by embedding the geometry of the domain into 
a fixed background mesh which is also used to construct the finite
element spaces for the surface and bulk approximations.
In order to obtain a stable
method, independent of the position of the geometry in the background
mesh, and to handle the potential small cut elements in the analysis, certain
stabilization terms are added that provide control of the local
variation of the discrete functions.
In this work we extend ideas from CutFEM framework developed
over the last half a decade to synthesize a novel cut discontinuous Galerkin method
(cutDGM) for coupled bulk-surface PDEs.

\subsection{Earlier work}
The development of the cut finite element framework was initiated by the seminal
papers~\cite{BurmanHansbo2010,BurmanHansbo2012} considering the weak imposition
of boundary conditions for the Poisson problem on unfitted meshes.
Shortly after, the idea was picked up by a number of authors
to formulate cut finite element methods
for the Stokes type problems\cite{BurmanHansbo2013,Massing2012,MassingLarsonLoggEtAl2013a,BurmanClausMassing2015,GuzmanOlshanskii2016,HansboLarsonZahedi2014a},
the Oseen problem~\cite{MassingSchottWall2016,WinterSchottMassingEtAl2017}
and number of related
fluid problems, see~\cite{Schott2017} for a comprehensive overview.

Prior to the arrival of CutFEMs,
unfitted \emph{discontinuous} Galerkin methods have successfully
been employed to solve boundary and interface problems
on complex and evolving domains~\cite{BastianEngwer2009, Saye2015},
including two-phase flows~\cite{SollieBokhoveVegt2011,HeimannEngwerIppischEtAl2013,MuellerKraemer-EisKummerEtAl2016}.
In unfitted discontinuous Galerkin method,
troublesome small cut elements can be merged with neighbor elements with a large intersection support
by simply extending the local finite element basis from the large element to the small cut element.
As the inter-element continuity is enforced only weakly,
the coupling of the these extended basis functions to additional elements incident
with the small cut elements does not lead to an
over-constrained system, as it would happen if globally continuous finite element functions were
employed.
Consequently, unfitted discontinuous Galerkin methods
provide an alternative stabilization mechanism to ensure the
well-posedness and well-conditioning of the discretized systems.
Thanks to their favorable conservation and stability properties,
unfitted discontinuous Galerkin methods
remain an attractive alternative to continuous CutFEMs,
but some drawbacks are the almost complete absence of numerical
analysis except for~\cite{Massjung2012,JohanssonLarson2013}, the implementational labor to
reorganize the matrix sparsity patterns when agglomerating cut elements,
and the lack of natural discretization approaches for PDEs defined on surfaces.

For PDEs defined on surfaces, the idea of using the finite element space from the embedding
bulk mesh was already formulated and analyzed in~\cite{OlshanskiiReuskenGrande2009},
and then further extended to high-order methods~\cite{GrandeReusken2016} and
evolving surface problems~\cite{OlshanskiiReuskenXu2014a,HansboLarsonZahedi2015b}.
A stabilized cut finite element for the Laplace-Beltrami problem
were introduced in~\cite{BurmanHansboLarson2015} where the additional stabilization
cures the resulting system matrix from being ill-conditioned,
as an alternative to diagonal preconditioning used in~\cite{OlshanskiiReusken2010}.
Finally, after the initial work~\cite{ElliottRanner2013} on fitted finite element
discretizations of coupled bulk-surface PDEs, only a
few number of corresponding unfitted (continuous) finite element schemes have been formulated,
see~\cite{BurmanHansboLarsonEtAl2014,HansboLarsonZahedi2016,GrossOlshanskiiReusken2014}.

\subsection{Contribution and outline of the paper}
In this work, we formulate a novel cut discontinuous Galerkin method
for the discretization of coupled bulk-surface problems on a given bounded domain $\Omega$.
The strong and weak formulation of a continuous prototype problem are briefly reviewed in Section~2.
Motivated by our
earlier work~\cite{BurmanHansboLarsonEtAl2016a},
we introduce a cut discontinuous Galerkin method for bulk-surface PDEs in Section~3.
The method is employs
discontinuous piecewise linear elements on a background mesh consisting of simplices in $\mathbb{R}^d$.
The boundary $\Gamma$ of the computational domain $\Omega$ is represented by a continuous, piecewise
approximation of distance functions associated with $\Gamma$.
For both the discrete bulk and surface domain, the active background meshes consist
of those elements with a non-trivial intersection with the respective domain.
Utilizing the general stabilization framework developed for continuous CutFEMs,
we add certain, so-called ghost penalty stabilization in the vicinity of the embedded
surface to ensure that the overall cutDGM is stable and its system matrix is well-conditioned.
The exact mechanism is further elucidated in Section~4, where short proofs of
the coercivity of the bilinear forms introduced in Section~3 are given.
We also demonstrate that the condition number of the (properly
rescaled) system matrix scales like~$\mcO(h^{-2})$.  All theoretical
results hold with constants independent of the position of the
domain relative to the background mesh.

While a full a priori analysis of the proposed method is beyond the limited scope of this
work, we perform a convergence rate study in Section~5 instead, demonstrating the optimal approximation
properties of the formulated cutDGM. Finally, we also demonstrate that the employed CutFEM
stabilizations are essential for the geometrically robust convergence and conditioning
properties of the method.

\subsection{Basic notation}
Throughout this work, $\Omega \subset \RR^d$ denotes an
open and bounded domain with smooth boundary 
$\Gamma = \partial \Omega$. For $U \in \{\Omega, \Gamma \}$ and $ s \in \RR$,
let $H^{s}(U)$ be the
standard Sobolev spaces defined on $U$.
As usual, we write $(\cdot,\cdot)_{s,U}$ and $\|\cdot\|_{s,U}$ for the associated inner products
and norms. If there is no confusion, we
occasionally write $(\cdot,\cdot)_{U}$ and $\|\cdot \|_{U}$ for the
inner products and norms associated with $L^2(U)$, with $U$ being a
measurable subset of $\RR^d$. 
Finally, any norm $\|\cdot\|_{\mcP_h}$ used in this work which
involves a collection of geometric entities $\mcP_h$ should be
understood as broken norm defined by $\|\cdot\|_{\mcP_h}^2 =
\sum_{P\in\mcP_h} \|\cdot\|_P^2$ whenever $\|\cdot\|_P$ is well-defined,
with a similar convention for scalar products $(\cdot,\cdot)_{\mcP_h}$.
Finally, it is understood that the notation $\|\cdot\|_{\mcP_h \cap U}$,
for any given set $U\subset \RR^d$
means to sum up over the corresponding cut parts; that is,
$\|\cdot\|_{\mcP_h \cap U}^2 =
\sum_{P\in\mcP_h} \|\cdot\|_{P\cap U}^2$.

\section{Model problem}
Let $\Omega \subset \RR^d$ be a bounded domain with smooth boundary $\Gamma$
equipped with a outward pointing normal field $n_{\Gamma}$
and signed distance function $\rho$; that is, $\rho$  satisfies $ \rho(x) = \pm \dist(x, \Gamma)$
with the distance being strictly negative if $x \in \Omega$ and positive
otherwise. It is well known that for some positive $\delta_0$ small enough
and any $\delta$ with $0 < \delta < \delta_0$,
every point~$x$ in the tubular neighborhood
$U_{\delta}(\Gamma) = \{x \in \RR^d: |\rho(x)| < \delta\}$
has a uniquely defined closest point $p(x)$ on $\Gamma$
satisfying $x = p(x) + \rho(x)n(p(x))$, see, e.g,~\cite[Sec. 14.6]{GilbargTrudinger2001}.
For any function $v_{\Gamma} \in C^1(\Gamma)$, the tangential gradient
$\nabla_\Gamma v_{\Gamma}$ is defined by
\begin{align}
 \nabla_\Gamma v_{\Gamma}  = P \nabla v_{\Gamma},
\end{align}
with $P(x) = I - \ns(x)\otimes\ns(x)$ denoting the projection of $\RR^d$ onto
the tangential space at point $x \in \Gamma$.
As model for a coupled bulk-surface problem,
we consider the problem: given functions $f_{\Omega}$ and $f_{\Gamma}$
on $\Omega$ and $\Gamma$, respectively, and positive constants
$c_{\Omega}, c_{\Gamma}$,
find functions $u_{\Omega} : \Omega \to \RR$
and $u_{\Gamma}: \Gamma \to \RR$ such that
\begin{subequations}
  \label{eq:bulk-surface-strong}
\begin{alignat}{3}
  - \Delta \ub + \ub &= \fb & &\quad \text{in } \Omega,
  \label{eq:bulk-strong}
  \\
 \partial_n \ub &=  \cs \us - \cb \ub \ && \quad \text{on } \Gamma,
  \label{eq:surface-bulk-coupling-strong}
  \\
  - \Delta_{\Gamma}\us + \us &= \fs - \partial_n \ub & &\quad \text{on }
  \Gamma,
  \label{eq:surface-strong}
\end{alignat}
\end{subequations}
where $\Delta_\Gamma$ is the Laplace-Beltrami operator on $\Gamma$
defined by
\begin{equation}
\Delta_\Gamma = \nabla_\Gamma \cdot \nabla_\Gamma.
\end{equation}
Following~\cite{ElliottRanner2013,BurmanHansboLarsonEtAl2014},
we can derive a weak formulation by multiplying~(\ref{eq:bulk-strong}) with a test function
$\vb \in H^1(\Omega)$ and using Green's formula to obtain
\begin{align}
  (\nabla \ub, \nabla \vb)_{\Omega} - (\partial_n \ub, \vb )_{\Gamma}
  + (\ub, \vb)_{\Omega}
  = (f, \vb)_{\Omega},
\end{align}
which together with the coupling condition~(\ref{eq:surface-bulk-coupling-strong})
leads to
\begin{align}
  (\nabla \ub, \nabla \vb)_{\Omega} 
  + (\ub, \vb)_{\Omega}
  + (\cb \ub - \cs \us, \vb )_{\Gamma}
  = (\fb, \vb)_{\Omega}.
  \label{eq:bulk-weak}
\end{align}
Next, taking $\vs \in H^1(\Gamma)$,
a similar treatment of~(\ref{eq:surface-strong}) yields
\begin{align}
  (\nabla \us, \nabla \vs)_{\Gamma}
+ (\us, \vs)_{\Gamma}
  - (\cb \ub - \cs \us, \vs )_{\Gamma} =
  (\fs, \vs)_{\Gamma}.
  \label{eq:surface-weak}
\end{align}
Now replacing $\vb$ with $\cb \vb$ in~(\ref{eq:bulk-weak}) and $\vs$ with $\cs \vs$
in~(\ref{eq:surface-weak})
and summing up the two equations motivates us to
introduce the following forms to describe the bulk, surface and coupling related parts
of the overall bilinear form $a(\cdot, \cdot)$:
\begin{align}
  a_{\Omega}(\ub, \vb)
  &= (\nabla \ub, \nabla \vb)_{\Omega} + (\ub, \vb)_{\Omega},
    \label{eq:a-bulk-def}
  \\
  a_{\Gamma}(\us, \vs)
  &= (\nablas \us, \nablas \vs)_{\Gamma} + (\us, \vs)_{\Gamma},
    \label{eq:a-surface-def}
  \\
  a_{\Omega\Gamma}(u, v)
  &= (\cb \ub - \cs \us, \cb \vb - \cs \vs)_{\Gamma}.
  \label{eq:a-bulk-surface-def}
\end{align}
As final ingredient, we
define the bulk function spaces $\Vb = H^1(\Omega)$, the surface function space $\Vs =  H^1(\Gamma)$ and
the total space $V = \Vb \times \Vs$,
and introduce also the short-hand notation $u = (\ub, \us) \in V$ and $v = (\vb, \vs) \in V$.
Then the variational problem for the coupled bulk-surface PDE~(\ref{eq:bulk-surface-strong}) is to seek
$u \in V$ such that $\foralls v \in V$
\begin{align}
  a(u, v) = l(v),
 \label{eq:bsp-weak} 
\end{align}
where the bilinear form $a(\cdot, \cdot)$ and linear form $l(\cdot)$ are given by
\begin{align}
  a(u, v) &= 
             \cb \ab (\ub, \vb)
            + \cs \as (\us, \vs)
            + a_{\Omega\Gamma}(u, v),
  \label{eq:a-def}
  \\
  l(v) &= \cb (fb, \vb)_{\Omega} + \cs (\fs, \vs)_{\Gamma}.
  \label{eq:l-def}
\end{align}
Using the natural energy norm $\tn v \tn = \sqrt{a(v,v)}$, it follows immediately that
the bilinear form $a$ is coercive with respect to $\tn \cdot \tn$ and that both forms $a$ and $l$
are continuous, and thus the Lax-Milgram theorem ensures the existence of a unique solution to
the weak problem~\eqref{eq:bsp-weak}, see also \cite{ElliottRanner2013}.

\section{A cut discontinuous Galerkin method for bulk-surface problems}
\label{sec:cutDGM}
The main idea in the cut discontinuous Galerkin discretization of
the bulk-surface PDE~(\ref{eq:bsp-weak})
is now to embedd the domain $\Omega$ into an easy-to-generate 3d background mesh
in an unfitted manner.
The approximation spaces for the discrete bulk and surface solution components 
are then given by suitable restrictions of the discontinuous finite element functions
defined on background mesh to the bulk and surface domains, respectively.
We start with describing the relevant computational domains and related geometric
quantities before we turn to the definition of the cut finite element spaces
and the final discrete formulation.

\subsection{Computational domains}
\label{ssec:comp-domain}
Assume that ${\mcT}^h$
is a quasi-uniform\footnote{Quasi-uniformity is mainly assumed
  to simplify the overall presentation.} background mesh
with global mesh size $h$ consisting of shape-regular elements $\{T\}$
which cover $\Omega$.
Let $\rho^h$ be a continuous, piecewise
linear approximation of the distance function $\rho$
and define the discrete surface $\Gamma^h$ as the zero level set of
$\rho^h$,
\begin{align}
  \Gamma^h &= \{ x \in \Omega : \rho^h(x) = 0 \}
  \intertext{and correspondingly, the discrete bulk domain is given by}
  \Omega^h &= \{ x \in \Omega : \rho^h(x) < 0 \}.
\end{align}
Note that $\Gamma^h$ is a polygon consisting of flat faces
with a piecewise defined constant exterior unit normal $n^h$.
We assume that: 
\begin{itemize}
\item $\Gamma^h \subset U_{\delta_0}(\Gamma)$ and that the closest
point mapping $p:\Gamma^h \rightarrow \Gamma$ is a bijection for $0< h
\leq h_0$.
\item The following estimates hold
  \begin{equation}
    \| \rho \|_{L^\infty(\Gamma^h)} \lesssim h^2, \qquad
    \| n - n^h \circ p \|_{L^\infty(\Gamma)} \lesssim h.
\label{eq:geometric-assumptions-II}
\end{equation}
\end{itemize}
These properties are, for instance, satisfied if
$\rho^h$ is the Lagrange interpolant of $\rho$.
Starting from the  background mesh $\mcT^h$, we define the
\emph{active} (background) meshes for discretization of the bulk and surface problem by
\begin{align} 
  \mcTb^h &= \{ T \in {\mcT}^{h} : T^{\circ} \cap \Omega^h \neq \emptyset \},
  \label{eq:active-mesh-bulk}
  \\
  \mcTs^h &= \{ T \in \mcTb^{h} : T \cap \Gamma^h \neq \emptyset \},
  \label{eq:active-mesh-surface}
\end{align}
respectively. 
Here, $T^{\circ}$ denotes the topological interior of an element $T$
and thus $\mcTb^h$ does not contain any element which intersects only with the
boundary $\Gamma^h$ but not with the interior $\Omega^h$.
Clearly, $\mcTs^h \subset \mcTb^h$. 
For the actives meshes $\mcTb^h$ and $\mcTs^h$, the corresponding sets of interior faces are denoted by
\begin{align} 
  \mcFb^h &= \{ F =  T^+ \cap T^-: T^+, T^- \in \mcTb^h \},
  \label{eq:faces-interior-bulk}
  \\
  \quad
  \mcFs^h &= \{ F =  T^+ \cap T^-: T^+, T^- \in \mcTs^h \}.
  \label{eq:faces-interior-surface}
\end{align}
Note that by extracting
$\mcTs^h$ from $\mcTb^h$ instead of $\mcT^h$, we automatically pick a unique
element from $\mcT^h$ in the case that $\Gamma^h \cap T$ coincides with
an interior face of the background mesh~$\mcT^h$.
Additionally, we will also need the set of interior faces of the active bulk mesh $\mcTb^h$
which belong to elements intersected by the discrete surface~$\Gamma^h$,
\begin{align} 
  \mcFb^{h,g} = \{ F = T^+ \cap T^- \in \mcFb^h:  T^+ \in \mcTs^h \lor T^- \in \mcTs^h \}.
  \label{eq:faces-gp}
\end{align}
This set of faces will be instrumental in defining certain stabilization forms, also
known as \emph{ghost penalties}, hence the superscript $g$.
As usual, face normals $n^+_F$ and $n^-_F$ are given by the unit
normal vectors which are perpendicular on $F$ and are pointing exterior
to $T^+$ and $T^-$, respectively.

For the surface approximation $\Gamma^h$, 
corresponding collection of geometric entities can be generated by considering
the intersection of $\Gamma^h$ with individual elements of the active mesh, i.e., we
define the set of surface faces and their edges by
\begin{align}
  \mcK^h&=\{K = \Gamma^h \cap T : T \in \mcT_{\Gamma}^h \},
  \\
  \mcE^h&=\{E = K^+ \cap K^-:  K^+, K^- \in \mcK^h \}.
\end{align}
To each interior edge $E$ we associate the co-normals $n^{\pm}_E$ 
given by the unique unit vector which is
coplanar to the surface element $K^{\pm}$, perpendicular
to $E$ and points outwards with respect to $K^{\pm}$. 
Note that while the two face normals $n_F^{\pm}$ only differ by a sign,
the edge co-normals $n_E^{\pm}$ do lie in genuinely different planes.
The various set of geometric entities are illustrated in
Figure~\ref{fig:domain-set-up}.
\begin{figure}[htb]
  \begin{center}
  \begin{minipage}[t]{0.45\textwidth}
    \vspace{0pt}
    \includegraphics[width=1.0\textwidth]{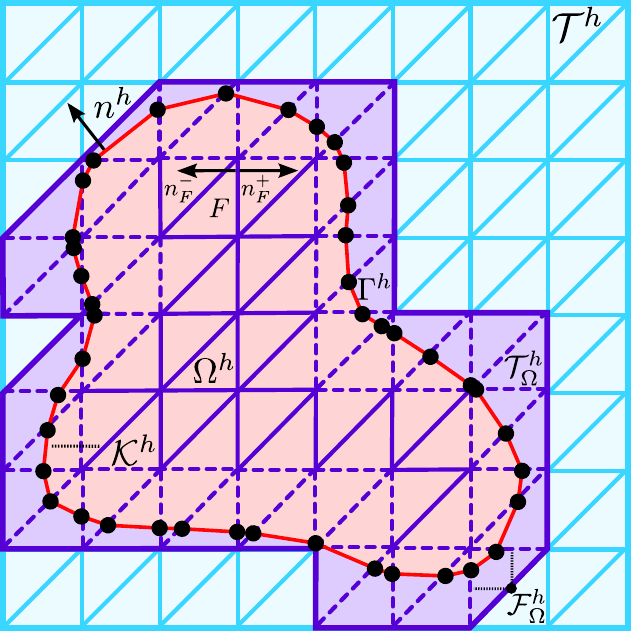}
  \end{minipage}
  \hspace{0.02\textwidth}
  \begin{minipage}[t]{0.45\textwidth}
    \vspace{0pt}
    \includegraphics[width=1.0\textwidth]{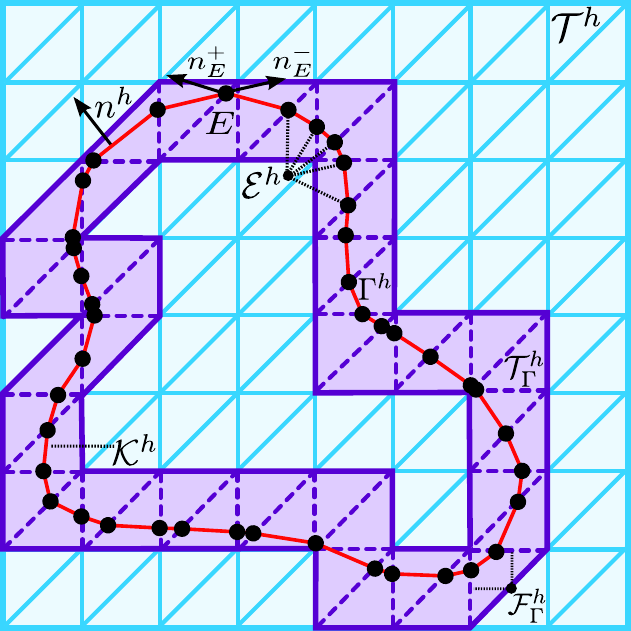}
  \end{minipage}
\end{center}
\caption{Computational domains for the bulk-surface problem. (Left) Active
  mesh used to define the approximation space for
  the bulk solution. Faces on which ghost penalty stabilization are defined are
  plotted as dashed faces. (Right) Corresponding computational domain set-up for the
discretization of the surface.}
  \label{fig:domain-set-up}
\end{figure}
%
%
%

\subsection{The cut discontinuous Galerkin method}
\label{ssec:cutDGM}
We start with defining the discrete counterparts of the function
spaces $\Vb$ and $\Vs$ to be
the broken polynomial spaces consisting of piecewise linear, but not
necessarily globally continuous functions defined on the respective
active meshes:
\begin{align}
  \Vb^h = \bigoplus_{T \in \mcTb^h} P_1(T),
  \quad
  \Vs^h = \bigoplus_{T \in \mcTs^h} P_1(T),
  \quad
  V^h = \Vb^h \times \Vs^h.
\label{eq:Vsh-def}
\end{align}
For the formulation of the cut discontinuous Galerkin method,
we also need the notation of average and fluxes of piecewise defined functions.
More precisely, assume that
$\sigma$ and $w$ are, possibly vector-valued, elementwise defined functions
on $\mcT^h$ which are smooth enough to admit a two-valued trace on all faces.
Then the standard and face normal weighted average fluxes 
are given by
\begin{align}
\mean{\sigma}|_F &= \dfrac{1}{2} (\sigma_F^{+} + \sigma_F^{-}),
  \label{eq:mean-std-def-F}
\\
  \mean{n_F  \cdot \sigma }|_F &=
                                 \dfrac{1}{2} n^{+}_F \cdot (\sigma_F^{+} + \sigma_F^{-})
                                 =
                                 \dfrac{1}{2}(n^{+}_F \cdot \sigma_F^{+} - n^{-}_F \cdot \sigma_F^{-}),
  \label{eq:mean-def-F}
\end{align}
while the jump across an interior face $F \in
\mcF^h$ is defined by
\begin{align}
\jump{w}|_F &= w_F^{+} - w_F^{-},
\end{align}
with
$w(x)^\pm = \lim_{t\rightarrow 0^+} w(x - t n_F^{\pm})$.
In the case of vector-valued functions, the jump is taken componentwise.
As the co-normal vectors $n_E^{\pm}$ are generally not collinear,
the standard and co-normal weighted average fluxes for a piecewise discontinuous,
possibly vector-valued function
$\sigma$ on $\mcK^h$ is defined by
\begin{align}
\mean{\sigma}|_E &= \dfrac{1}{2} (\sigma_E^{+} + \sigma_E^{-}),
  \label{eq:mean-std-def}
\\
\mean{n_E  \cdot \sigma }|_E &= \dfrac{1}{2}(n^{+}_E \cdot \sigma_E^{+} - n^{-}_E \cdot \sigma_E^{-}),
  \label{eq:mean-def}
\end{align}
respectively. Similarly, the jump across an interior face $E\in \mcE^h$ is
given by
\begin{align} 
\jump{w}|_E &= w_E^{+} - w_E^{-}.
\end{align}
We are now ready to define the discrete, discontinuous Galerkin
counterparts of the bilinear forms \eqref{eq:a-bulk-def},
\eqref{eq:a-surface-def}, and~(\ref{eq:a-bulk-surface-def}) and set
\begin{align}
  \ab^h(\vb,\wb) &= (\nabla \vb, \nabla \wb)_{\mcTb^h\cap\Omega^h}
                   + (\vb, \wb)_{\mcTb^h\cap\Omega^h}
  + \gammab (h^{-1}\jump{\vb},\jump{\wb})_{\mcFb^h}
  \nonumber
 \\                    
  &\qquad  
  - (\mean{n_F \cdot \nabla \vb }, \jump{\wb})_{\mcFb^h\cap\Omega^h}
- (\jump{\vb}, \mean{n_F \cdot \nabla \wb })_{\mcFb^h\cap\Omega^h},
  \label{eq:ah-bulk-def}
  \\
  \as^h(\vs,\ws) &= (\nablash \vs, \nablash \ws)_{\mcK^h}
                   + (\vs, \ws)_{\mcTs^h\cap\Gamma^h}
                 + \gammas (h^{-1}\jump{\vs},\jump{\ws})_{\mcE^h}
  \nonumber
  \\
               &\qquad
  - (\mean{n_E \cdot \nabla \vs }, \jump{\ws})_{\mcE^h}
  - (\jump{\vs}, \mean{n_E \cdot \nabla \ws })_{\mcE^h},
    \label{eq:ah-surface-def}
    \\
  a_{\Omega\Gamma}^h(v, w)
                 &= (\cb \vb - \cs \vs, \cb \wb - \cs \ws)_{\Gamma^h},
  \label{eq:ah-bulk-surface-def}
  \\
  a^h(v, w)  &= \cb\ab^h(\vb,\wb) + \cs\as^h(\vs, \ws)
  + a^h_{\Omega\Gamma}(v, w).
  \label{eq:ah-def}
\end{align}
Similarly, the relevant discrete linear forms are given by
\begin{align}
  \lb^h(\vb) &= (\fb, \vb)_{\Omega^h},
  \label{eq:lbh-def}
  \\
  \ls^h(\vs) &= ( \fs^e, \vs )_{\Gamma^h},
  \label{eq:lsh-def}
  \\
  l^h(v) &= \cb \lb^h(\vb) +  \cs \ls^h (\vs ).
  \label{eq:lh-def}
\end{align}
Here, $\fs^e$ denotes the extension of $\fs$ to the tubular
neighborhood $U_{\delta}(\Gamma)$ using the closest point projection
by requiring that $\fs^e(x) = \fs(p(x))$.
Finally, appropriate ghost-penalties for the bulk and surface part are defined by
\begin{align}
  \jbh (\vb,\wb) &= \mub
                       h^{-1}(\jump{\vb},\jump{\wb})_{\mcFbg^h}
                       + \taub h (  n_F \cdot \jump{\nabla \vb} ,n_F \cdot  \jump{\nabla \wb} )_{\mcFbg^h}
  \label{eq:jbh-def}
  \\
  \jsh (\vs,\ws) &= \mus
  h^{-2}(\jump{\vs},\jump{\ws})_{\mcFs^h}
                       + \taus ( n_F \cdot \jump{\nabla \vs} ,n_F \cdot  \jump{\nabla \ws} )_{\mcFs^h}
               \label{eq:jsh-def}
  \\
  j^h(v,w) &= \cb \jbh(\vb, \wb) + \cs\jsh(\vs, \ws)
  \label{eq:jh-def}
\end{align}
where $\mub,\mus, \taub, \taus$ are positive parameters. To ease the
notation, we also define the ghost penalty
enhanced bulk and surface  bilinear forms
\begin{align}
  A_U^h(v_U, w_U) = a_U^h(v_U, w_U) + j_U^h(v_U, w_U),  \quad  U \in \{\Omega, \Gamma\}.
\end{align}
Now the cut discontinuous Galerkin method for the bulk-surface problem is to
seek $u^h = (\ub^h, \us^h) \in V^h = \Vb^h \times \Vs^h$ such that
$\foralls v \in V^h$
\begin{align}
  A^h(u^h, v) \coloneqq a^h(u^h,v) + j^h(u^h, v) = l^h(v).
  \label{eq:sbp-cutdg-formulation}
\end{align}
\begin{remark}
  The defined ghost penalties are crucial to devise a geometrically robust,
  well-conditioned and optimally convergent discretization method, irrespective
  of the particular cut configuration. We note that in general, the unstabilized
  cutDGM suffers from three drawbacks. First, certain inverse inequalities
  fundamental for the analysis of DGMs do not hold any more when only the
  physical, cut part of the background mesh is considered. Second,
  cut configurations with very small cut parts can lead to an almost vanishing
  contribution of certain degree of freedoms in the system matrix.
  Third, the restriction of discontinuous finite element functions from the active
  mesh $\mcTs^h$ to the surface $\Gamma^h$
  results in a highly linear dependent set of functions, and thus purely surface-based
  ``norms''are not capable of distinguishing them, which also leads
  to an ill-conditioned system matrix.
\end{remark}

\section{Stability properties}
\label{sec:stability-properties}
In this section, we investigate the stability properties of the
proposed cutDGM for the coupled bulk-surface problem. In particular,
we show that the ghost-penalty enhanced discrete form $A_h$ is
coercive with respect to a natural discrete energy-norm and that the
condition number of the resulting system matrix scales as
$\mcO(h^{-2})$, irrespective of the position of $\Omega^h$ relative to
the background mesh $\mcT^h$.

\subsection{Norms and coercivity}
\label{ssec:norms-coercivity}

A natural discrete energy-norm for the forthcoming stability analysis
is given by combining the individual discrete energy norms
for the bulk and surface parts,
\begin{align}
  \label{eq:ahb-norm-def}
  \tn \vb \tnb^2 &= \| \nabla \vb \|^2_{\Omega^h} + \| \vb \|^2_{\Omega^h} 
                 +  \|h^{-\onehalf} [\vb] \|^2_{\mcF^h}
                 + \jbh(\vb,\vb),
                 \\
  \label{eq:ahs-norm-def}
  \tn \vs \tns^2 &= \| \nablash \vs \|^2_{\Gamma^h} + \| \vs \|^2_{\Gamma^h}
                 +  \|h^{-\onehalf} [\vs] \|^2_{\mcE^h}
                 + \jsh(\vs,\vs),
  \\
  \intertext{with the semi-norm induced by the coupling bilinear $a^h_{\Omega\Gamma}$
  to define}
  \tn v \tn_h^2 &= \cb \tn \vb \tnb^2 + \cs \tn \vs \tns^2 + \| \cb \vb - \cs \vs \|_{\Gamma^h}^2.
  \label{eq:ah-norm-def}
\end{align}
%
With these norm definitions, the coercivity of the total bilinear form
$A^h$ can be easily shown once coercivity properties for the bulk and surface
bilinear form are established individually. In other words, 
we wish to show that
\begin{alignat}{3}
  \tn \vb \tnb^2 &\lesssim
  \Ab(\vb, \vb) 
  & &\quad \foralls \vb \in \Vb^h,
  \label{eq:ahb-coerc}
  \\
  \tn \vs \tns^2
  &\lesssim
  \As(\vs, \vs) 
  & & \quad \foralls \vs \in \Vs^h,
  \label{eq:ahs-coerc}
\end{alignat}
which together with the simple observation that
  \begin{align}
    A^h(v,v) &= \Ab^h(\vb,\vb) + \As^h(\vs,\vs) + \abs^h(v,v)
               \\
               &\gtrsim
               \tn \vb \tnb^2 
               + \tn \vs \tns^2
               +\| \cb \vb - \cs \vs \|_{\Gamma^h}^2,
\end{align}
leads us to the following proposition.
\begin{proposition}
  The discrete bilinear form $A^h$ is coercive with respect
  to the discrete energy norm~(\ref{eq:ah-norm-def}):
  \begin{align}
    \tn v \tn_h^2 &\lesssim A^h(v,v), \quad \foralls v \in V^h.
  \end{align}
\end{proposition}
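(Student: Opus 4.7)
The plan is to follow exactly the decomposition sketched in the paragraph preceding the proposition: establish the bulk and surface coercivity estimates \eqref{eq:ahb-coerc} and \eqref{eq:ahs-coerc} separately, then assemble them with the nonnegative coupling contribution. The overall structure of the proof is thus essentially algebraic once the two component estimates are in place; the real work lives in \eqref{eq:ahb-coerc}--\eqref{eq:ahs-coerc}, whose verifications are (as the author indicates) deferred to the individual treatments of the bulk and surface forms.

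For the bulk part, I would expand $\Ab^h(\vb,\vb) = \ab^h(\vb,\vb)+\jbh(\vb,\vb)$ and treat the symmetric interior penalty consistency terms $(\mean{n_F\cdot\nabla \vb},\jump{\vb})_{\mcFb^h\cap\Omega^h}$ using Cauchy--Schwarz followed by Young's inequality. On full (uncut) elements one obtains a standard discrete trace inequality $\|n_F\cdot\nabla \vb\|_F^2 \lesssim h^{-1}\|\nabla \vb\|_T^2$; the difficulty is that on cut elements this bound with $\|\nabla\vb\|_{T\cap\Omega^h}$ fails uniformly in the cut configuration. Here the ghost penalty $\jbh$ is essential: the central auxiliary estimate $\|\nabla \vb\|^2_{\mcTb^h}\lesssim \|\nabla \vb\|^2_{\mcTb^h\cap\Omega^h}+\jbh(\vb,\vb)$, together with an analogous extension estimate for the $L^2$ part, allows one to pass from the physical cut domain to the whole active mesh where the usual inverse and trace inequalities apply. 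Choosing the penalty parameter $\gammab$ sufficiently large then absorbs the consistency contributions and yields \eqref{eq:ahb-coerc}.

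The surface estimate \eqref{eq:ahs-coerc} is handled in an entirely parallel way, with tangential gradients $\nablash$ replacing $\nabla$, the edges $\mcE^h$ of $\mcK^h$ replacing the faces $\mcFb^h$, and the co-normal trace inequality replacing the face trace inequality; the surface ghost penalty $\jsh$ plays the analogous role of transferring control from $\mcK^h$ to $\mcTs^h$ so that inverse estimates remain uniform in the cut geometry. Once \eqref{eq:ahb-coerc} and \eqref{eq:ahs-coerc} are available, the proof of the proposition is immediate: by definition
\begin{align*}
  A^h(v,v) = \cb \Ab^h(\vb,\vb) + \cs \As^h(\vs,\vs) + a^h_{\Omega\Gamma}(v,v),
\end{align*}
and since $a^h_{\Omega\Gamma}(v,v)=\|\cb \vb-\cs \vs\|_{\Gamma^h}^2\geq 0$, combining the two component coercivity bounds with suitable weights yields
\begin{align*}
  A^h(v,v) \gtrsim \cb\tn \vb\tnb^2 + \cs\tn \vs\tns^2 + \|\cb \vb-\cs \vs\|_{\Gamma^h}^2 = \tn v\tn_h^2,
\end{align*}
which is the claimed estimate. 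The main obstacle, and the only nontrivial ingredient, is really hidden in \eqref{eq:ahb-coerc}--\eqref{eq:ahs-coerc}: making the DG inverse/trace inequalities robust on the cut meshes via the ghost penalty terms, uniformly in the position of $\Gamma^h$ relative to $\mcT^h$.
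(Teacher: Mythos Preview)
Your proposal is correct and follows exactly the paper's approach: reduce the coercivity of $A^h$ to the separate bulk and surface coercivity estimates \eqref{eq:ahb-coerc}--\eqref{eq:ahs-coerc} and then assemble them using the nonnegativity of the coupling term $a^h_{\Omega\Gamma}(v,v)=\|\cb\vb-\cs\vs\|_{\Gamma^h}^2$. Your description of how \eqref{eq:ahb-coerc} and \eqref{eq:ahs-coerc} are obtained (ghost-penalty extension to the full active mesh so that standard inverse/trace inequalities apply, then absorption of the consistency terms for sufficiently large $\gammab,\gammas$) also matches the paper's arguments in the subsequent subsections; the only minor deviation is that no separate $L^2$ extension estimate is needed, since the bulk energy norm \eqref{eq:ahb-norm-def} only contains $\|\vb\|_{\Omega^h}^2$.
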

The following two subsection are thus devoted to prove that the estimates~(\ref{eq:ahb-coerc}) and~(\ref{eq:ahs-coerc})
hold.

\subsection{Coercivity of the discrete bulk form $\Ab^h$}
\label{ssec:coerc-Abh}
A standard ingredient in the numerical analysis of discontinuous Galerkin methods is
the inverse inequality
\begin{align}
  \| n_F \cdot \nabla v \|_F \leqslant C_I h_T^{-\onehalf}\| \nabla v \|_T,
  \label{eq:inverse-trace-ineq}
\end{align}
which holds for discrete functions $v \in P_1(T)$. 
Here, the face $F$ is part of the element boundary $\partial T$ and
the inverse constant $C_I = C_I(\tfrac{|F|}{|T|})$ depends on the ratio
of the face area $|F|$ and element volume $|T|$, and thus ultimately on the
shape regularity of $\mcT^h$.
Unfortunately, a
corresponding inverse inequality of the form
\begin{align}
  \| n_F \cdot \nabla v \|_{F\cap\Omega^h} \leqslant C_I h_T^{-\onehalf}\| \nabla v \|_{T \cap \Omega^h}
\end{align}
does not hold as the ratio $\tfrac{|F|}{|T|}$ can become arbitrarily large,
depending on the cut configuration.
As a partial replacement, one might be tempted to use the simple estimate 
\begin{align}
  \| n_F \cdot \nabla v \|_{F\cap \Omega^h}
  \leqslant 
  \| n_F \cdot \nabla v \|_{F}
  \leqslant C_I h_T^{-\onehalf}\| \nabla v \|_T
  \label{eq:inverse-est-simple}
\end{align}
instead.
To fully exploit this idea, it is necessary to extend the control
of the $\|\nabla v\|^2_{\Omega^h}$ part in natural energy norm associated with~$\ab^h$
from the physical domain $\Omega^h$ to the entire active mesh $\mcTb^h$.
This is precisely the role of the ghost-penalty term~$\jbh$:
\begin{lemma}
  \label{lem:ghost-penalty-bulk}
  For $v \in \Vb^h$ it holds that
  \begin{gather}
    \| \nabla v \|_{\mcTb^h}^2
    \lesssim
    \| \nabla v \|_{\Omega^h}^2  + \jbh(v, v)
    \lesssim
    \| \nabla v \|_{\mcTb^h}^2,
    \intertext{and consequently, using~\eqref{eq:inverse-est-simple}}
    \| h^{\onehalf} n_F \cdot \nabla v \|_{\mcFb^h\cap\Omega^h}^2
    \lesssim
    \| \nabla v \|_{\Omega^h}^2  + \jbh(v, v),
    \label{eq:normal-flux-estimate}
  \end{gather}
  with the hidden constant depending only in the shape-regularity of $\mcT^h$.
\end{lemma}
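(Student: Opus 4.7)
The plan is to prove the two-sided bound by the standard ghost-penalty chain argument, from which~(\ref{eq:normal-flux-estimate}) will follow immediately. The direction $\| \nabla v \|_{\Omega^h}^2 + \jbh(v,v) \lesssim \| \nabla v \|_{\mcTb^h}^2$ is the easy one: the first summand is trivially controlled since $\Omega^h \subset \bigcup_{T \in \mcTb^h} T$, and each face contribution of $\jbh(v,v)$ is bounded by applying the inverse trace inequality~(\ref{eq:inverse-trace-ineq}) on both sides of $F \in \mcFbg^h$ (together with an analogous polynomial trace estimate for $v$ itself), with shape regularity absorbing the constants.

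For the nontrivial bound $\| \nabla v \|_{\mcTb^h}^2 \lesssim \| \nabla v \|_{\Omega^h}^2 + \jbh(v,v)$, I would split $\mcTb^h = (\mcTb^h \setminus \mcTs^h) \cup \mcTs^h$. Uncut elements $T \in \mcTb^h \setminus \mcTs^h$ satisfy $T \subset \overline{\Omega^h}$ (since $T^{\circ}\cap\Omega^h \neq \emptyset$ and $T\cap\Gamma^h=\emptyset$), so $\|\nabla v\|_T^2$ is already included in $\|\nabla v\|_{\Omega^h}^2$. The remaining work is to transport control onto each cut element via the pairwise ghost-penalty estimate
\begin{equation*}
\| \nabla v \|_{T^+}^2 \;\lesssim\; \| \nabla v \|_{T^-}^2 + h^{-1}\| [v] \|_F^2 + h\, \| n_F \cdot [\nabla v] \|_F^2,
\end{equation*}
valid for neighbouring $T^\pm \in \mcTb^h$ sharing a face $F$. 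Since $v$ is piecewise linear, $[\nabla v]|_F$ is a constant vector; splitting it into normal and tangential parts and noting that the tangential component equals $\nabla_F[v]|_F$, the latter is bounded by $h^{-1}\|[v]\|_F$ via a polynomial inverse inequality on $F$ (recall $[v]|_F$ is affine). A routine scaling with $|T^\pm|\sim h^d$ and $|F|\sim h^{d-1}$ then completes the local estimate. Iterating along a chain $T=T_0,T_1,\dots,T_N$ of face-adjacent elements with $F_i=T_i\cap T_{i+1}\in\mcFbg^h$ and terminating at an uncut $T_N \in \mcTb^h \setminus \mcTs^h$ yields
\begin{equation*}
\|\nabla v\|_T^2 \;\lesssim\; \|\nabla v\|_{T_N}^2 + \sum_{i=0}^{N-1}\bigl(h^{-1}\|[v]\|_{F_i}^2 + h\,\| n_{F_i}\cdot [\nabla v] \|_{F_i}^2\bigr),
\end{equation*}
and summing over $T\in\mcTs^h$ closes the main bound.

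The main obstacle is precisely this chain construction: one must show the hop-count $N$ is uniformly bounded and each face in $\mcFbg^h$ is reused only $\mcO(1)$ times across all chains. This is supplied by the geometric assumptions~(\ref{eq:geometric-assumptions-II}) together with quasi-uniformity and shape-regularity, which force $\mcTs^h$ to be a tubular layer of $\mcO(1)$ element-thickness around $\Gamma^h$, so every cut element lies within a bounded number of face-hops through $\mcFbg^h$ from an uncut element. The consequence~(\ref{eq:normal-flux-estimate}) then follows at once: for $F \in \mcFb^h$ adjacent to a simplex $T \in \mcTb^h$ one has $\|h^{\onehalf} n_F \cdot \nabla v\|_{F \cap \Omega^h}^2 \leq \|h^{\onehalf} n_F \cdot \nabla v\|_F^2 \lesssim \|\nabla v\|_T^2$ by~(\ref{eq:inverse-trace-ineq}); summing over $F$ and invoking the first estimate of the lemma finishes the proof.
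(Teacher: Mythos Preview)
Your argument is correct and is precisely the standard chain argument behind the ghost-penalty mechanism. Note, however, that the paper does not actually prove this lemma: it simply refers to \cite{BurmanHansbo2012,Massing2012,MassingLarsonLoggEtAl2013a}, so there is no in-paper proof to compare against beyond those references.

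Two remarks on how your write-up relates to those sources. First, the cited works treat \emph{continuous} cut finite elements, where the ghost penalty contains only the normal-jump term $h\|n_F\cdot[\nabla v]\|_F^2$; you correctly adapt the pairwise estimate to the discontinuous setting by also controlling the tangential part of $[\nabla v]|_F$ through $\nabla_F[v]$ and the inverse inequality on $F$, which is exactly what the additional $\mub h^{-1}\|[v]\|_F^2$ term in $\jbh$ is designed to absorb. Second, in the cited proofs the chain is typically terminated not at a strictly uncut element but at one with a \emph{fat} intersection, $|T_N\cap\Omega^h|\geq c\,|T_N|$, on which $\|\nabla v\|_{T_N}\lesssim\|\nabla v\|_{T_N\cap\Omega^h}$ holds because $\nabla v$ is elementwise constant. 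This variant is slightly more robust (it does not require any fully interior element to exist), but under the assumptions here---smooth $\Gamma$, quasi-uniform mesh, and $h\leq h_0$ small---your termination criterion is equivalent and the bounded chain length and bounded face-reuse follow as you indicate.
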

\begin{proof}
 For a detailed proof, we refer to~\cite{BurmanHansbo2012, Massing2012,MassingLarsonLoggEtAl2013a}.
\end{proof}
Thanks to the ghost penalty Lemma~\ref{lem:ghost-penalty-bulk},
we can establish the coercivity of $\Ab^h$ by simply following the standard
arguments in the classical proof for symmetric interior penalty methods.
\begin{proposition}
  \label{prop:coerc-bulk}
  The discrete bulk form $\Ab^h$ is coercive with respect
  to the discrete energy norm $\tn \cdot \tnb$; that is,
  \begin{gather}
    \tn v \tnb^2 \lesssim \Ab^h(v, v), \quad \foralls v \in \Vb^h,
  \label{eq:coercivity-Ahb}
  \end{gather}
\end{proposition}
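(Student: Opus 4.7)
The plan is to follow the classical coercivity argument for the symmetric interior penalty DG method, with the crucial modification that the problematic cut-face inverse trace inequality is replaced by the ghost-penalty estimate from Lemma~\ref{lem:ghost-penalty-bulk}. First I would expand $\Ab^h(v,v) = \ab^h(v,v) + \jbh(v,v)$ using the definition~\eqref{eq:ah-bulk-def}, combining the two symmetric consistency terms into $-2(\mean{n_F \cdot \nabla v},\jump{v})_{\mcFb^h \cap \Omega^h}$. This leaves us needing to absorb this indefinite cross term into the positive contributions $\|\nabla v\|^2_{\Omega^h}$, $\gammab \|h^{-\onehalf}\jump{v}\|^2_{\mcFb^h}$, and $\jbh(v,v)$.

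Next, I would apply Cauchy--Schwarz and the weighted Young's inequality with a small parameter $\epsilon > 0$ to obtain
\begin{align}
  2|(\mean{n_F \cdot \nabla v},\jump{v})_{\mcFb^h \cap \Omega^h}|
  \leqslant
  \epsilon \|h^{\onehalf}\mean{n_F \cdot \nabla v}\|^2_{\mcFb^h \cap \Omega^h}
  + \epsilon^{-1}\|h^{-\onehalf}\jump{v}\|^2_{\mcFb^h}.
\end{align}
Here the standard step of invoking the discrete inverse trace inequality on the full face would fail since the cut face $F \cap \Omega^h$ need not be shape-regular with respect to the cut element. This is precisely where estimate~\eqref{eq:normal-flux-estimate} of Lemma~\ref{lem:ghost-penalty-bulk} steps in, providing
\begin{align}
  \|h^{\onehalf}\mean{n_F \cdot \nabla v}\|^2_{\mcFb^h \cap \Omega^h}
  \lesssim \|\nabla v\|^2_{\Omega^h} + \jbh(v,v).
\end{align}

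Substituting this back, I would choose $\epsilon$ sufficiently small so that the $\epsilon(\|\nabla v\|^2_{\Omega^h} + \jbh(v,v))$ contribution can be absorbed by a fraction of the corresponding positive terms in $\Ab^h(v,v)$. This fixes $\epsilon$, and the remaining requirement is that $\gammab$ be chosen large enough (depending only on shape regularity, via the constants in Lemma~\ref{lem:ghost-penalty-bulk} and the factor $\epsilon^{-1}$) to dominate the penalty term $\epsilon^{-1}\|h^{-\onehalf}\jump{v}\|^2_{\mcFb^h}$ while still leaving a positive multiple of $\|h^{-\onehalf}\jump{v}\|^2_{\mcFb^h}$. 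Collecting the remaining lower bounds then yields the desired coercivity in $\tn \cdot \tnb$.

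The main obstacle is conceptual rather than technical: without ghost-penalty stabilization, one cannot control the normal flux on the cut parts of faces by the gradient on the physical subdomain, because the geometric constant in the standard inverse trace estimate blows up for arbitrarily small cut elements. The ghost-penalty term $\jbh$ plays the role of a surrogate inverse inequality, transferring control from $\Omega^h$ to the entire active mesh $\mcTb^h$, after which the classical SIPG coercivity argument proceeds with constants independent of the cut configuration.
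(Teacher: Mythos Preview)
Your proposal is correct and follows essentially the same route as the paper: expand $\Ab^h(v,v)$, apply an $\epsilon$-weighted Cauchy--Schwarz to the indefinite cross term, invoke the ghost-penalty Lemma~\ref{lem:ghost-penalty-bulk} to control the normal-flux term, and then fix $\epsilon$ small and $\gammab$ large. The only cosmetic difference is that the paper first uses the lemma to pass from $\|\nabla v\|_{\Omega^h}^2 + \jbh(v,v)$ to $\|\nabla v\|_{\mcTb^h}^2$ and then applies the standard inverse trace inequality on the \emph{full} faces, whereas you invoke the consequence~\eqref{eq:normal-flux-estimate} directly; these are equivalent.
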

\begin{proof}
  We follow closely the standard arguments. Setting $\wb = \vb$ in~(\ref{eq:ah-bulk-def})
  and combining the ghost-penalty Lemma~\ref{lem:ghost-penalty-bulk}
  and a $\epsilon$-Cauchy-Schwarz inequality of the form $2 ab \leqslant \epsilon a^2 + \epsilon^{-1} b^2$
  with an inverse estimate
  yields
  \begin{align}
    \Ab^h(v, v)
    &= \| \nabla v \|_{\Omega^h}^2
      - 2 (\mean{n_F \cdot \nabla v}, \jump{v})_{\mcFb^h \cap \Omega^h}
      + \gamma_{\Omega} \|h^{-\onehalf}\jump{v}\|_{\mcFb^h}^2
      \\
    &\quad
      + \jbh(v,v) + \| v \|_{\Omega^h}^2
    \\
    &\gtrsim
     \| \nabla v \|_{\mcTb^h}^2
      - \epsilon \|h^{\onehalf}\mean{n_F \cdot \nabla \vs}\|_{\mcFb^h}^2
      -\epsilon^{-1}\|h^{-\onehalf} \jump{v}\|_{\mcFb^h} \|^2
    \\
    &\quad
      + \gamma_{\Omega} \|h^{-\onehalf} \jump{v}\|_{\mcFb^h}^2
      + \onehalf \jbh(v,v) + \| v \|_{\Omega^h}^2
    \\
    &\gtrsim
     (1-\epsilon C_I)\| \nabla v \|_{\mcTb^h}^2
    \\
    &\quad
      + (\gamma_{\Omega} - \epsilon^{-1})\| h^{-\onehalf}\jump{v}\|_{\mcFb^h}^2
      + \onehalf \jbh(v,v) + \| v \|_{\Omega^h}^2
      \gtrsim \tn v \tnb^2
  \end{align}
  if we chose $0 < \epsilon \lesssim 1/(2C_I)$ small enough and $\gammab > \epsilon^{-1}$.
\end{proof}

\subsection{Coercivity of the discrete surface form $\As^h$}
\label{ssec:coerc-discr-surf}
Next, we turn to the stability properties of the discrete surface form $\As^h$.
First observe that the unstabilized DG energy ``norm''
\begin{align}
\tn v \tn_{\Gamma}^2 \coloneqq \| \nablash v \|_{\Gammah}^2 + \|v \|_{\Gammah}^2
  + \|h^{-\onehalf} \jump{v} \|_{\mcE^h}^2
  \label{eq:unstabilized-dg-surface-norm}
\end{align}
does not define an actual norm on $\Vs^h$. For instance, the piecewise linear and continuous
approximation $\rho^h$ of the distance function $\rho$ vanishes on $\Gammah$.
It was shown in \cite{BurmanHansboLarsonEtAl2016a} that a proper norm can obtained if
the ghost penalty term $\jsh$ was added, resulting in our norm definition~(\ref{eq:ahs-norm-def}).
More, precisely, the following discrete Poincar\'e inequality was established.
\begin{lemma}
  \label{lem:discrete-poincare}
  Let $h \in (0,h_0]$ with $h_0$ small
  enough. Then the following estimate holds:
  \begin{equation}
    h^{-1}\| v - \lambda_{\Gamma^h}(v) \|^2_{\mcT^h}
    \lesssim
    \| \nablash v \|^2_{\Gamma^h}
    + \jsh(v,v)
    \quad \forall v \in V^h,
  \end{equation}
  where $\lambda_{\Gamma^h}(v) = \tfrac{1}{|\Gammah|}\int_{\Gammah} v \dsh$
  is the mean value of $v$ on $\Gammah$.
\end{lemma}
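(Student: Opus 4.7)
The plan is to derive the inequality by combining three ingredients: a (broken) Poincaré inequality on the piecewise flat surface $\Gamma^h$, a trace-type bound that controls edge jumps on $\mcE^h$ by face jumps on $\mcF_\Gamma^h$, and a surface-analogue of the ghost penalty Lemma~\ref{lem:ghost-penalty-bulk} that lifts $L^2$ control from $\Gamma^h$ to the whole active mesh $\mcT_\Gamma^h$. Throughout, I write $\bar v = \lambda_{\Gamma^h}(v)$ and observe that since $\bar v$ is a constant we have $\nablash (v-\bar v) = \nablash v$ and $\jump{v-\bar v}=\jump{v}$ across every edge or face.

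First, on the compact Lipschitz manifold $\Gamma^h$, I would apply a broken Poincaré inequality for piecewise smooth functions in $V_\Gamma^h$:
\begin{equation*}
  \|v - \bar v\|_{\Gamma^h}^2 \lesssim \|\nablash v\|_{\Gamma^h}^2 + \|h^{-1/2}\jump{v}\|_{\mcE^h}^2.
\end{equation*}
The proof is standard: one patches together a continuous version of Poincaré on $\Gamma^h$ with a telescoping sum of the edge jumps, and the Poincaré constant stays bounded provided $h\leqslant h_0$ is small enough so that $\Gamma^h$ stays $C^{0,1}$-close to $\Gamma$.

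Next, I would absorb the edge-jump term into the ghost penalty $\jsh$. Since $\jump{v}$ is a piecewise polynomial on every face $F \in \mcF_\Gamma^h$, and every surface edge $E \in \mcE^h$ sits inside such a face, a polynomial trace inequality gives $\|\jump{v}\|_E^2 \lesssim h^{-1}\|\jump{v}\|_F^2$, so that
\begin{equation*}
  \|h^{-1/2}\jump{v}\|_{\mcE^h}^2 \lesssim \|h^{-1}\jump{v}\|_{\mcF_\Gamma^h}^2 \lesssim \jsh(v,v),
\end{equation*}
matching exactly the $h^{-2}$ scaling built into $\jsh$ in~(\ref{eq:jsh-def}).

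The core step, and the main obstacle, is the surface-side analogue of Lemma~\ref{lem:ghost-penalty-bulk}: for every $w \in V_\Gamma^h$,
\begin{equation*}
  h^{-1}\|w\|_{\mcT_\Gamma^h}^2 \lesssim \|w\|_{\Gamma^h}^2 + \jsh(w,w).
\end{equation*}
The idea is the usual ghost-penalty chaining argument of Burman and collaborators: every element $T\in\mcT_\Gamma^h$ can be reached from an element $T_\star$ whose intersection $\Gamma^h\cap T_\star$ carries a non-degenerate cut by a bounded chain of elements sharing faces in $\mcF_\Gamma^h$; on $T_\star$, the $L^2$ norm is comparable to $h\|w\|_{\Gamma^h\cap T_\star}^2$ (using that on a shape-regular element a $P_1$ function's bulk $L^2$ mass scales like $h$ times its surface $L^2$ mass on a non-degenerate cut), and each hop across a ghost face costs a bounded multiple of the corresponding jump and normal-flux contributions that are summed in $\jsh$. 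The restriction $h\leqslant h_0$ small is precisely what guarantees that these chains have uniformly bounded length and that the geometric constants remain stable.

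Finally, applying the surface ghost-penalty lemma to $w = v - \bar v$, then inserting the broken Poincaré bound from the first step, and absorbing the edge jumps via the trace step, yields
\begin{equation*}
  h^{-1}\|v - \bar v\|_{\mcT_\Gamma^h}^2
  \lesssim \|v-\bar v\|_{\Gamma^h}^2 + \jsh(v,v)
  \lesssim \|\nablash v\|_{\Gamma^h}^2 + \jsh(v,v),
\end{equation*}
which is the claimed discrete Poincaré inequality.
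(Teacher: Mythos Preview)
The paper does not supply its own proof of this lemma; it is quoted from~\cite{BurmanHansboLarsonEtAl2016a}. Your overall architecture---a broken Poincar\'e inequality on $\Gamma^h$, absorption of the edge jumps into $\jsh$, followed by a ghost-penalty lift from $\Gamma^h$ to $\mcT_\Gamma^h$---matches the spirit of that reference, but your justification of the third step has a real gap.

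The anchor estimate you state, that on a fat-cut element $T_\star$ the bulk $L^2$ mass of a $P_1$ function is comparable to $h$ times its $L^2$ mass on the cut $K_\star=\Gamma^h\cap T_\star$, is false. A linear function on a $d$-simplex carries $d{+}1$ degrees of freedom, while its trace on the $(d{-}1)$-dimensional flat piece $K_\star$ fixes only $d$ of them; the component in the direction normal to $K_\star$ is invisible on the cut. The simplest witness is $w=\rho^h$ on $T_\star$: then $w|_{K_\star}\equiv 0$ but $\|w\|_{T_\star}>0$. The correct local bound is
\[
\|w\|_{T_\star}^2 \;\lesssim\; h\,\|w\|_{K_\star}^2 + h^{2}\|\nabla w\|_{T_\star}^2,
\]
so after chaining you are left with an extra contribution $h\|\nabla w\|_{\mcT_\Gamma^h}^2$ on the right-hand side. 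This term is \emph{not} controlled by $\|w\|_{\Gamma^h}^2+\jsh(w,w)$ through face-jump hopping alone, because $\jsh$ penalises only the \emph{jumps} of $\nabla w$ across faces, not its magnitude inside elements. Indeed, were $\Gamma^h$ a flat hyperplane, the globally linear function $w=\rho^h$ would give a vanishing right-hand side and a positive left-hand side; the closedness of $\Gamma^h$ is what rules this out and must enter the argument explicitly.

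The missing ingredient, established in~\cite{BurmanHansboLarsonEtAl2016a}, is a separate full-gradient control estimate bounding $h\|\nabla v\|_{\mcT_\Gamma^h}^2$ by $\|\nablash v\|_{\Gamma^h}^2$ and $\jsh(v,v)$. Its proof genuinely exploits that the piecewise normals $n^h$ of a closed discrete surface vary enough that, after propagating around $\Gamma^h$, every component of $\nabla v$ is eventually captured tangentially. Once this estimate is in hand, your anchor-plus-chaining argument closes, and combining with your steps~1 and~2 yields the lemma.
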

To prove that $\As^h$ is in fact coercive with respect to a
properly defined discrete energy norm,
we need to borrow one more result from~\cite{BurmanHansboLarsonEtAl2016a}
which allows us to control the co-normal flux~$n_E \cdot \nablash v$
for $v\in \Vs^h$.

\begin{lemma} The following estimate holds
\begin{equation}
  h \| \nablash v \|^2_{\partial \mcK_h}
\lesssim
\| \nablash v \|^2_{\Gamma^h}
+\jsh(v,v),
\label{eq:inverse-estimate-for-conormal-flux}
\end{equation}
for $0< h \leq h_0$ with $h_0$ small enough.
\label{lem:inverse-estimate-for-conormal-flux}
\end{lemma}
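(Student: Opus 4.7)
The plan is to reduce the inverse estimate on the co-normal flux along $\partial\mcK^h$ to an $L^2$-control of the full Cartesian gradient $\nabla v$ on the active surface mesh $\mcT_\Gamma^h$, and then to invoke the full-gradient / macro-element estimate that is part of the ghost-penalty toolbox developed in~\cite{BurmanHansboLarsonEtAl2016a} (precisely the analogue of the commented-out lemma appearing earlier in the manuscript).

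First I would exploit that $v|_T \in P_1(T)$ and that $K = \Gamma^h\cap T$ is a single planar polygon, so $\nabla v|_T$ is constant and the projection $P_{\Gamma^h}$ is constant on $K$. Hence $\nablash v|_K = P_{\Gamma^h}\nabla v|_T$ is a constant vector of modulus at most $|\nabla v|_T|$. This reduces the edge norm to a purely geometric ratio: for each edge $E\subset\partial K$
$$
\|\nablash v\|_E^2 \;=\; |E|\,|\nablash v|_K|^2 \;\leqslant\; \frac{|E|}{|T|}\,\|\nabla v\|_T^2.
$$
Summing the finitely many edges of each $K$ and then over $T\in\mcT_\Gamma^h$, using shape regularity of $\mcT^h$ together with the elementary bound $|\partial K|\lesssim h$, one obtains an estimate of the form $\|\nablash v\|_{\partial\mcK^h}^2 \lesssim h^{1-d}\|\nabla v\|_{\mcT_\Gamma^h}^2$. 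Multiplying by $h$ turns this into $h\|\nablash v\|_{\partial\mcK^h}^2 \lesssim h^{2-d}\|\nabla v\|_{\mcT_\Gamma^h}^2$.

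The second and harder step is to control $\|\nabla v\|_{\mcT_\Gamma^h}^2$ by the right-hand side of~(\ref{eq:inverse-estimate-for-conormal-flux}) with the correct $h$-power. For this I would cite, with the scaling of $\jsh$ chosen in~(\ref{eq:jsh-def}), the full-gradient estimate of~\cite{BurmanHansboLarsonEtAl2016a}, giving $\|\nabla v\|_{\mcT_\Gamma^h}^2 \lesssim h^{d-2}\bigl(\|\nablash v\|_{\Gamma^h}^2 + \jsh(v,v)\bigr)$. The underlying mechanism, which is really the substance of the lemma, is a macro-element argument: every $T\in\mcT_\Gamma^h$ is linked through a uniformly bounded chain of faces in $\mcF^h_\Gamma$ to some $T^*$ whose cut $K^*$ is non-degenerate in the sense $|K^*|\gtrsim h^{d-1}$; the jumps of $v$ and $n_F\cdot\nabla v$ across the chain, controlled by $\jsh(v,v)$, allow one to transfer the standard bound $|\nabla v|_{T^*}|^2 \lesssim |T^*|^{-1}\|\nablash v\|_{K^*}^2$ from $T^*$ to $T$.

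Combining the two displayed estimates, the $h$-powers cancel and~(\ref{eq:inverse-estimate-for-conormal-flux}) follows. The first step is routine once constancy of $\nablash v$ on $K$ is used; the principal obstacle is the second step, and in particular the verification that the $h^{-2}$ scaling of the jump term and the $h^0$ scaling of the normal-flux jump term in $\jsh$ are exactly the right ones to produce the full-gradient bound with the $h^{d-2}$ factor needed to close the estimate.
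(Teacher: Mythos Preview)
The paper does not supply its own proof; the lemma is quoted from~\cite{BurmanHansboLarsonEtAl2016a}. Your two-step outline, however, has a real gap in step~2. You claim a full-gradient bound $\|\nabla v\|_{\mcT_\Gamma^h}^2 \lesssim h^{d-2}\bigl(\|\nablash v\|_{\Gamma^h}^2 + \jsh(v,v)\bigr)$, but this is stronger than what actually holds: the commented-out lemma you allude to reads $h\|\nabla v\|_{\mcT_\Gamma^h}^2 \lesssim h^{2}\|\nablash v\|_{\Gamma^h}^2 + \jsh(v,v)$, so the ghost-penalty term enters with one power of $h$ \emph{less}, not more, than the tangential-gradient term. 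A concrete obstruction is $v=\rho^h$: then $\nablash\rho^h=0$, $\jump{\rho^h}=0$, and $|n_F\cdot\jump{\nabla\rho^h}|=O(h)$ by smoothness of $\rho$, whence $\jsh(\rho^h,\rho^h)=O(h^2)$; yet $\|\nabla\rho^h\|_{\mcT_\Gamma^h}^2\sim h$, so your bound would force $h\lesssim h^{d}$. Feeding the \emph{correct} full-gradient estimate into your step~1 yields only $h\|\nablash v\|_{\partial\mcK^h}^2 \lesssim \|\nablash v\|_{\Gamma^h}^2 + h^{-2}\jsh(v,v)$, two powers of $h$ short.

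The loss is already incurred in step~1, where you crudely bound the tangential gradient by the full gradient and discard the projection structure. The missing ingredient is to compare $\nablash v|_K=P_K\nabla v|_T$ directly with $\nablash v|_{K^*}=P_{K^*}\nabla v|_{T^*}$ on a fat neighbour: the difference decomposes into a gradient-jump contribution (absorbed by $\jsh$ via the chain argument you describe) and a term $(P_K-P_{K^*})\nabla v|_{T^*}$. Since the discrete normals vary only by $O(h)$ thanks to~\eqref{eq:geometric-assumptions-II}, one has $|P_K-P_{K^*}|\lesssim h$, so the full gradient appears only with an extra $h^{2}$ weight---and then the genuine full-gradient lemma is exactly strong enough to close the estimate with the right powers.
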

Now simply replacing the crucial normal-flux estimate~\eqref{eq:normal-flux-estimate}
with co-normal flux estimate from the previous
Lemma~(\ref{lem:inverse-estimate-for-conormal-flux}),
the proof of Lemma~\ref{prop:coerc-bulk} literally transfers to the surface case,
and thus we have established
the following result.
\begin{proposition}
  The discrete surface form $\As^h$ is coercive with respect
  to the discrete energy norm $\tn \cdot \tns$:
  \begin{align}
    \tn v \tns^2 &\lesssim \As^h(v, v), \quad \foralls v \in \Vs^h.
  \end{align}
\end{proposition}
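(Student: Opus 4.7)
The plan is to mirror the proof of Proposition \ref{prop:coerc-bulk} line by line, with the only change being the substitution of the co-normal flux estimate (\ref{eq:inverse-estimate-for-conormal-flux}) in the role played by (\ref{eq:normal-flux-estimate}) in the bulk case. No additional ingredients should be required, since the algebraic structure of $\as^h$ and $\ab^h$ is identical up to replacing $\mcFb^h \cap \Omega^h$ with $\mcE^h$, $n_F$ with $n_E$, and $\nabla$ with $\nablash$.

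In detail, I would set $\ws = \vs$ in the definition (\ref{eq:ah-surface-def}), giving
\begin{align}
  \As^h(\vs, \vs)
  &= \| \nablash \vs \|_{\mcK^h}^2 + \| \vs \|_{\Gamma^h}^2
     + \gammas \| h^{-\onehalf} \jump{\vs} \|_{\mcE^h}^2
   \nonumber \\
  &\quad - 2(\mean{n_E \cdot \nablash \vs}, \jump{\vs})_{\mcE^h}
     + \jsh(\vs, \vs),
\end{align}
where I have already used that $\mcTs^h \cap \Gamma^h = \Gamma^h$. The only non sign-definite contribution is the symmetric consistency term, which I would treat by an $\epsilon$-Cauchy-Schwarz inequality,
\begin{align}
  2 |(\mean{n_E \cdot \nablash \vs}, \jump{\vs})_{\mcE^h}|
  \leqslant
  \epsilon \| h^{\onehalf} \mean{n_E \cdot \nablash \vs} \|_{\mcE^h}^2
  + \epsilon^{-1} \| h^{-\onehalf} \jump{\vs} \|_{\mcE^h}^2.
\end{align}
Bounding the average by the two-sided trace and applying Lemma \ref{lem:inverse-estimate-for-conormal-flux} yields
\begin{align}
  \| h^{\onehalf} \mean{n_E \cdot \nablash \vs} \|_{\mcE^h}^2
  \lesssim
  h \| \nablash \vs \|_{\partial \mcK^h}^2
  \lesssim
  \| \nablash \vs \|_{\Gamma^h}^2 + \jsh(\vs, \vs),
\end{align}
with the absorbed $\jsh$-contribution covered by the half of $\jsh(\vs,\vs)$ present in $\As^h(\vs,\vs)$.

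Combining these estimates and choosing $\epsilon$ small enough (so that the prefactor $C_I \epsilon$ in front of $\|\nablash \vs\|_{\Gamma^h}^2$ is at most $1/2$) and $\gammas$ large enough (so that $\gammas - \epsilon^{-1} > 0$) yields
\begin{align}
  \As^h(\vs, \vs)
  \gtrsim
  \| \nablash \vs \|_{\Gamma^h}^2 + \| \vs \|_{\Gamma^h}^2
  + \| h^{-\onehalf} \jump{\vs} \|_{\mcE^h}^2 + \jsh(\vs, \vs)
  = \tn \vs \tns^2,
\end{align}
as required. I do not expect a genuine obstacle here; the whole point of the surface ghost penalty $\jsh$ and of Lemma \ref{lem:inverse-estimate-for-conormal-flux} is exactly to furnish the counterpart of the classical inverse trace inequality that fails on arbitrarily small cut elements. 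The most delicate bookkeeping point is that $\|\nablash \vs\|_{\mcK^h}^2 = \|\nablash \vs\|_{\Gamma^h}^2$ must absorb a factor $1 - C\epsilon$ while $\jsh(\vs,\vs)$ is diminished by at most $1/2$, so that the resulting constants are strictly positive; this is routine.
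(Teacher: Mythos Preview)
Your proposal is correct and takes exactly the same approach as the paper: the paper does not spell out a separate argument at all but simply remarks that the proof of Proposition~\ref{prop:coerc-bulk} ``literally transfers to the surface case'' once the normal-flux estimate~(\ref{eq:normal-flux-estimate}) is replaced by the co-normal flux estimate of Lemma~\ref{lem:inverse-estimate-for-conormal-flux}. Your write-up is precisely this transfer carried out explicitly, with the expected choice of $\epsilon$ small and $\gammas$ large; nothing further is needed.
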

\subsection{Condition number estimates}
\label{ssec:condition-number-estimate}

Following closely the presentation in~\cite{BurmanHansboLarsonEtAl2014},
we now show that the condition number of the system matrix
associated with a properly rescaled version of the bilinear form~(\ref{eq:a-def})
can be bounded by $O(h^{-2})$ independently of the position of the
bulk domain $\Omega$ relative to the background mesh~$\mcT_h$.
Let $\{\phi_{\Omega,i}\}_{i=1}^{N_{\Omega}}$
and $\{\phi_{\Gamma,i}\}_{i=1}^{N_{\Gamma}}$
be the standard piecewise linear basis
functions associated with $\mcTb^h$ and $\mcTs^h$, respectively.
Thus
\begin{align}
v^h = (\vs^h, \vb^h)
= \Bigl(\sum_{i=1}^{N_{\Omega}} V_{\Omega,i} \phi_{\Omega,i}, \sum_{i=1}^{N_{\Gamma}} V_{\Gamma,i} \phi_{\Gamma,i}\Bigr)
\end{align}
for $v^h \in V^h$ and expansion
coefficients
$V = \bigl(\{V_{\Omega,i}\}_{i=1}^{N_{\Omega}}, \{V_{\Gamma,i}\}_{i=1}^{N_{\Gamma}}\bigl)
\in \RR^{N_{\Omega}}\times\RR^{N_{\Gamma}} = \RR^N$ with $N =N_{\Omega}+ N_{\Gamma}$.
It is well-known that
for any quasi-uniform mesh $\mcT^h$ consisting
of $d$-dimensional simplices,
the continuous $\| \cdot \|_{L^2(\mcT^h)}$- norm
of a finite element function $v \in \mcV^h = \spann(\{\phi_i\}_{i=1}^{M})$
is related to
the discrete $\| \cdot\|_{l^2(\RR^M)}$
of its corresponding coefficient vector $V$
via
\begin{align}
  h^{d/2} \| V \|_{\RR^M}
  \lesssim \| v_h \|_{L^2(\mcT^h)}
  \lesssim
  h^{d/2} \| V \|_{\RR^M}.
  \label{eq:mass-matrix-scaling}
\end{align}
Note that due to the different Hausdorff dimensions of the surface and bulk domain,
the discrete norms and forms for each domain scale differently with respect to the mesh size $h$.
For instance, we have clearly the Poincar\'e-type estimate\footnote{This is trivial
  since the mass term is already included in our form.}
\begin{align}
  \|\vb\|_{\mcTb^h} \lesssim \tn \vb \tnb \quad \foralls \vb  \in \Vb^h,
  \label{eq:poincare-est-bulk}
\end{align}
while for the surface problem, Lemma~\ref{lem:discrete-poincare} shows that
we have 
\begin{align}
\|\vs\|_{\mcTs^h} \lesssim \tn h^{\onehalf}\vs \tns \quad \foralls \vs  \in \Vs^h.
  \label{eq:poincare-est-surface}
\end{align}
Thus in order to pass back and forth between discrete $l^2$ and continuous,
similarly scaled $L^2$ norms on the surface and in the bulk domain,
it is natural to rescale the discrete surface functions. More precisely,
the system matrix $\mcA$ we will consider is given by the relation
\begin{align}
  ( \mcA V, W )_{\RR^N}  = \widetilde{A}_h(v_h, w_h) \coloneqq
  {A}_h(v_h, h^{\onehalf} w_h)
  \quad \foralls v_h,w_h \in
  V_h.
  \label{eq:system-matrix}
\end{align}
The system matrix $\mcA$ is
a bijective linear mapping 
$\mcA:\RR^N \to \RR^N$.
The operator norm and condition number of the matrix $\mcA$ are then defined by
\begin{align}
  \| \mcA \|_{\RR^N}
  = \sup_{V \in \widehat{\RR}^N\setminus 0}
  \dfrac{\| \mcA V \|_{\RR^N}}{\|V\|_N}
\quad \text{and}
\quad
  \kappa(\mcA) = \| \mcA \|_{\RR^N} \| \mcA^{-1} \|_{\RR^N}
  \label{eq:operator-norm-and-condition-number-def}
\end{align}
respectively.
Following the approach in~\cite{ErnGuermond2006}, 
a bound for the condition number can be derived 
by combining~\eqref{eq:mass-matrix-scaling}
with suitable Poincar\'e-type estimates and inverse estimates
relating the $L^2$ norm to the discrete energy norms.
An immediate consequence of the discrete Poincar\'e estimates
for the discrete bulk and surface energy norms given by~\eqref{eq:poincare-est-bulk}
and~\eqref{eq:poincare-est-surface}, respectively, is the following
Poincar\'e estimate for the total discrete energy norm:
\begin{lemma}
  For $(\vb, \vs) \in V^h = \Vb^h \times \Vs^h$ it holds
  \begin{align}
    \| (\vb, \vs)\|_{\mcTb^h \times \mcTs^h} \lesssim
    \tn  (\vb, h^{\onehalf}\vs) \tn_h.
    \label{eq:discrete-poincare-Ah}
  \end{align}
\end{lemma}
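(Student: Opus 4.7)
The plan is to reduce this lemma to a direct combination of the two Poincaré-type estimates already established in the excerpt, namely~\eqref{eq:poincare-est-bulk} for the bulk part and~\eqref{eq:poincare-est-surface} for the surface part, together with the observation that the coupling term $\|\cb \vb - \cs \vs\|_{\Gamma^h}^2$ appearing in the definition~\eqref{eq:ah-norm-def} of $\tn\cdot\tn_h$ is non-negative and thus can only enlarge the right-hand side.

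First, by the definition of the product $L^2$-norm and the convention for broken norms given in the notation section, one expands
\begin{align*}
 \|(\vb, \vs)\|_{\mcTb^h \times \mcTs^h}^2
 = \|\vb\|_{\mcTb^h}^2 + \|\vs\|_{\mcTs^h}^2.
\end{align*}
Next, I would apply~\eqref{eq:poincare-est-bulk} to estimate the bulk contribution by $\tn \vb \tnb^2$ and~\eqref{eq:poincare-est-surface} to estimate the surface contribution by $\tn h^{\onehalf}\vs \tns^2$. Since $\cb, \cs > 0$ are fixed positive constants, the implicit constant may absorb $\min(\cb,\cs)^{-1}$, yielding
\begin{align*}
 \|\vb\|_{\mcTb^h}^2 + \|\vs\|_{\mcTs^h}^2
 \lesssim \cb \tn \vb \tnb^2 + \cs \tn h^{\onehalf}\vs \tns^2.
\end{align*}

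Finally, to conclude, I would add the non-negative coupling term $\|\cb \vb - \cs h^{\onehalf}\vs\|_{\Gamma^h}^2$ on the right and recognize the resulting expression as $\tn(\vb, h^{\onehalf}\vs)\tn_h^2$ by definition~\eqref{eq:ah-norm-def}, which establishes the claim. There is no genuine obstacle here; the only point to verify is that the $h^{\onehalf}$ scaling of the surface component matches the discrepancy between~\eqref{eq:poincare-est-bulk} and~\eqref{eq:poincare-est-surface}, which it does precisely because Lemma~\ref{lem:discrete-poincare} forces the surface Poincaré inequality to carry an additional half power of $h$ coming from the $L^2$-control of $\vs$ over the volumetric active mesh~$\mcTs^h$.
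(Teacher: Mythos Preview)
Your proposal is correct and follows exactly the route the paper indicates: the lemma is stated as an immediate consequence of the bulk and surface Poincar\'e-type estimates~\eqref{eq:poincare-est-bulk} and~\eqref{eq:poincare-est-surface}, and your argument simply spells out this implication by summing the two inequalities, absorbing the constants $\cb,\cs$, and adding the non-negative coupling term to recover $\tn(\vb,h^{\onehalf}\vs)\tn_h^2$.
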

Before we turn to  formulate and prove a suitable inverse inequality
for the total discrete energy norm, we briefly recall that
we have the following inverse inequalities:
\begin{alignat}{3}
  \| \nabla v \|_{T} &\lesssim h^{-1} \| v \|_{T},  && \quad \foralls v \in \Vb^h,
    \label{eq:inverse-est-element}
    \\
  \| v \|_{F} &\lesssim h^{-\onehalf} \| v \|_{T},  && \quad \foralls v \in \Vb^h,
  \label{eq:inverse-est-face}
  \\
  \| v \|_{\Gamma^h \cap T} &\lesssim h^{-\onehalf} \| v \|_{T}, && \quad \foralls v \in \Vs^h.
    \label{eq:inverse-est-surface}
\end{alignat}
While the first two are standard, the third one is less known and can be found
in, e.g., \cite{BurmanHansboLarson2015,BurmanHansboLarsonEtAl2016a,BurmanHansboLarsonEtAl2016c,BurmanHansboLarsonEtAl2016}. Now it is easy to show the following inverse inequality.
\begin{lemma}
  \label{lem:inverse-estimate-Ah}
  For $(\vb, \vs) \in V^h = \Vb^h \times \Vs^h$ it holds
  \begin{align}
    \tn  (\vb, h^{\onehalf}\vs) \tn_h \lesssim
    h^{-1}\| (\vb, \vs)\|_{\mcTb^h \times \mcTs^h}.
    \label{eq:inverse-estimate-Ah}
  \end{align}
\end{lemma}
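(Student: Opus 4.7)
The plan is to bound each term appearing in the expanded form of $\tn (\vb, h^{\onehalf}\vs) \tn_h^2$ separately by $h^{-2}(\|\vb\|_{\mcTb^h}^2 + \|\vs\|_{\mcTs^h}^2)$, using only the three listed inverse inequalities \eqref{eq:inverse-est-element}, \eqref{eq:inverse-est-face}, \eqref{eq:inverse-est-surface}, quasi-uniformity of $\mcT^h$, and the fact that edges $E\in\mcE^h$ lie on faces $F\in\mcFs^h$ of the background mesh (because $E = K^+\cap K^-$ is contained in $T^+\cap T^- = F$). The scaling by $h^{\onehalf}$ on the surface component is exactly what is needed to make the bulk and surface contributions balance at the $h^{-2}$ level.

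For the bulk part $\tn\vb\tnb^2$, I first write $\|\nabla\vb\|^2_{\Omega^h}\le\|\nabla\vb\|^2_{\mcTb^h}\lesssim h^{-2}\|\vb\|^2_{\mcTb^h}$ by \eqref{eq:inverse-est-element}, then bound the mass term trivially, then use \eqref{eq:inverse-est-face} on each side of every interior face to handle $\|h^{-\onehalf}[\vb]\|^2_{\mcFb^h}\lesssim h^{-2}\|\vb\|^2_{\mcTb^h}$, and finally treat the two parts of $\jbh(\vb,\vb)$: the jump part by \eqref{eq:inverse-est-face} and the gradient-jump part by combining \eqref{eq:inverse-est-face} with \eqref{eq:inverse-est-element}. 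For the rescaled surface part $h\tn\vs\tns^2$, I use $|\nablash\vs|\le|\nabla\vs|$ together with \eqref{eq:inverse-est-surface} and \eqref{eq:inverse-est-element} to get $\|\nablash\vs\|^2_{\Gammah}\lesssim h^{-3}\|\vs\|^2_{\mcTs^h}$, apply \eqref{eq:inverse-est-surface} directly to the mass term, use the inclusion $E\subset F$ followed by \eqref{eq:inverse-est-face} for the edge jump term, and handle the two $\jsh$ contributions analogously to the bulk ghost penalty. For the coupling term, the triangle inequality plus \eqref{eq:inverse-est-surface} on both $\vb$ and $h^{\onehalf}\vs$ gives $\|\cb\vb - \cs h^{\onehalf}\vs\|^2_{\Gammah}\lesssim h^{-1}\|\vb\|^2_{\mcTb^h} + \|\vs\|^2_{\mcTs^h} \lesssim h^{-2}(\|\vb\|^2_{\mcTb^h}+\|\vs\|^2_{\mcTs^h})$.

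Summing all contributions and absorbing the constants $\cb,\cs,\mub,\mus,\taub,\taus$ into the hidden constant yields the claim. I expect no real obstacle since every term reduces to a direct application of one of the three inverse estimates; the only point requiring care is verifying that the $h^{\onehalf}$ rescaling on $\vs$ precisely compensates the extra $h^{-1}$ factor that arises from applying the surface trace inverse \eqref{eq:inverse-est-surface} to surface terms, which is exactly the motivation for introducing it in \eqref{eq:system-matrix}.
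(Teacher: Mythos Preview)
Your proposal is correct and follows essentially the same term-by-term inverse-estimate strategy as the paper's proof, which likewise splits $\tn(\vb,h^{\onehalf}\vs)\tn_h^2$ into bulk, surface, and coupling contributions and bounds each by successive application of \eqref{eq:inverse-est-element}--\eqref{eq:inverse-est-surface}. One small imprecision: for the edge-jump term $\|\jump{\vs}\|_{\mcE^h}$ the inclusion $E\subset F$ alone is not enough, since $E$ has codimension one in $F$; what is needed is the analogue of \eqref{eq:inverse-est-surface} applied on the face $F$ (giving $\|\vs\|_{E}=\|\vs\|_{\Gamma^h\cap F}\lesssim h^{-\onehalf}\|\vs\|_F$) followed by \eqref{eq:inverse-est-face}, a step the paper also leaves implicit under ``variants of the inverse estimates''.
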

  \begin{proof}
    Recalling the definition of $\tn \cdot \tn_h$,
    \begin{align}
      \tn (\vb, h^{\onehalf}\vs)\tn_h^2 &= \tn \vb \tnb^2 +  \tn h^{\onehalf}\vs \tns^2
      + \|\cb \vb - h^{\onehalf} \cs \vs\|_{\Gamma^h}^2
      \\
      &= I + II + III,
    \end{align}
    it is enough to consider the last two terms, as term $I$ 
    can be treated similar to $II$.
    We start with the contributions of $II$ which are not related
    to $\jsh$ and after successively applying variants of the inverse estimates
    type~(\ref{eq:inverse-est-element}), (\ref{eq:inverse-est-surface}), we get
    \begin{align}
      \| h^{\onehalf} \nablash \vs \|_{\Gamma^h}
      &\lesssim
      \| \nabla \vs \|_{\mcTs^h} \lesssim
      h^{-1} \| \vs \|_{\mcTs^h},
      \\
      \| h^{\onehalf} \vs \|_{\Gamma^h}
      &\lesssim
      \| \vs \|_{\mcTs^h},
      \\
      \|h^{-\onehalf} \jump{h^{\onehalf} \vs} \|_{\mcE^h \cap T}
      &\lesssim h^{-1} \| \vs \|_{\mcTs^h}.
    \end{align}
    Turning to the contribution from $\jsh$, we see that
    \begin{align}
      \jsh(h^{\onehalf} \vs, h^{\onehalf} \vs)^{\onehalf}
      &\lesssim
        h^{-1}\|\jump{h^{\onehalf}  \vs} \|_{\mcFs^h}
        + \|\jump{h^{\onehalf} n_F \cdot \nabla \vs} \|_{\mcFs^h}
        \lesssim h^{-1} \| \vs \|_{\mcTs^h}.
    \end{align}
    Finally, we conclude the proof by estimating the remaining term $III$ as follows,
    \begin{align}
      III \lesssim
      \|\cb \vb \|_{\Gamma^h} + 
      \|h^{\onehalf} \cs \vs\|_{\Gamma^h}
      \lesssim  
      h^{-\onehalf} \|\cb \vb \|_{\mcTb^h} 
      + \|\cs \vs \|_{\mcTs^h}.
    \end{align}
  \end{proof}
\begin{theorem} 
  \label{thm:condition-number-estimate}
  The condition number of the stiffness matrix satisfies
  the estimate
\begin{equation}
\kappa( \mcA )\lesssim h^{-2},
\end{equation}
  where the hidden constant depends only on the quasi-uniformity
  of the  background mesh $\mcT^h$ and the chosen stability parameters.
\end{theorem}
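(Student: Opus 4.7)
The plan is to follow the approach of~\cite{BurmanHansboLarsonEtAl2014}, adapted to the coupled bulk--surface setting: we estimate $\|\mcA\|_{\RR^N}$ and $\|\mcA^{-1}\|_{\RR^N}$ separately and multiply. Both bounds are obtained by chaining together the mass-matrix scaling~\eqref{eq:mass-matrix-scaling}, the Poincar\'e-type estimate~\eqref{eq:discrete-poincare-Ah}, the inverse estimate~\eqref{eq:inverse-estimate-Ah}, and the coercivity/continuity of $A^h$ with respect to $\tn\cdot\tn_h$. The $h^{\onehalf}$-rescaling of the surface component built into $\widetilde{A}^h$ is precisely what makes the surface and bulk contributions scale compatibly, so that the uniform mass-matrix scaling $h^{d/2}\|V\|_{\RR^N}\sim \|v^h\|_{L^2(\mcTb^h\times\mcTs^h)}$ can be applied to both parts of $V$ simultaneously.

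For the upper bound on $\|\mcA\|_{\RR^N}$, I start from the duality identity $\|\mcA V\|_{\RR^N}=\sup_{W\neq 0}(\mcA V,W)_{\RR^N}/\|W\|_{\RR^N}$ and identify $(\mcA V,W)_{\RR^N}$ with $A^h$ evaluated on the pair $\bigl((\vb,h^{\onehalf}\vs),(\wb,h^{\onehalf}\ws)\bigr)$. A continuity estimate of the form $A^h(u,v)\lesssim \tn u\tn_h \tn v\tn_h$ on $V^h$ follows from Cauchy--Schwarz applied term-by-term in~\eqref{eq:ah-def}; the only delicate face-average flux terms on cut faces are bounded independently of the cut geometry thanks to the ghost-penalty estimates in Lemmas~\ref{lem:ghost-penalty-bulk} and~\ref{lem:inverse-estimate-for-conormal-flux}. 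Applying~\eqref{eq:inverse-estimate-Ah} to each factor and then~\eqref{eq:mass-matrix-scaling} gives
\begin{equation*}
(\mcA V,W)_{\RR^N}\lesssim h^{-2}\|(\vb,\vs)\|_{\mcTb^h\times\mcTs^h}\|(\wb,\ws)\|_{\mcTb^h\times\mcTs^h}\lesssim h^{d-2}\|V\|_{\RR^N}\|W\|_{\RR^N},
\end{equation*}
and hence $\|\mcA\|_{\RR^N}\lesssim h^{d-2}$.

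For the bound on $\|\mcA^{-1}\|_{\RR^N}$, I run the chain in reverse. Writing $F=\mcA V$, the mass-matrix scaling, the discrete Poincar\'e estimate~\eqref{eq:discrete-poincare-Ah}, and the coercivity of $A^h$ established in Section~\ref{ssec:norms-coercivity} give
\begin{equation*}
h^d\|V\|_{\RR^N}^2\lesssim\|(\vb,\vs)\|_{\mcTb^h\times\mcTs^h}^2\lesssim \tn(\vb,h^{\onehalf}\vs)\tn_h^2\lesssim A^h\bigl((\vb,h^{\onehalf}\vs),(\vb,h^{\onehalf}\vs)\bigr)=(\mcA V,V)_{\RR^N}\leq \|F\|_{\RR^N}\|V\|_{\RR^N},
\end{equation*}
which yields $\|V\|_{\RR^N}\lesssim h^{-d}\|F\|_{\RR^N}$, i.e., $\|\mcA^{-1}\|_{\RR^N}\lesssim h^{-d}$. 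Combining the two bounds produces the desired $\kappa(\mcA)=\|\mcA\|_{\RR^N}\|\mcA^{-1}\|_{\RR^N}\lesssim h^{d-2}\cdot h^{-d}=h^{-2}$.

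The main technical obstacle is the geometrically robust continuity of $A^h$ in $\tn\cdot\tn_h$: the face-average fluxes on cut faces entering~\eqref{eq:ah-bulk-def} and~\eqref{eq:ah-surface-def} must be absorbed uniformly with respect to the cut configuration, which is exactly what Lemmas~\ref{lem:ghost-penalty-bulk} and~\ref{lem:inverse-estimate-for-conormal-flux} furnish via the same mechanism already used in Proposition~\ref{prop:coerc-bulk}. The coupling contribution $\|\cb\vb-\cs\vs\|_{\Gammah}$ is controlled by the surface-trace inverse estimate~\eqref{eq:inverse-est-surface}; the remaining bookkeeping is routine Cauchy--Schwarz.
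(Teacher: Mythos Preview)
Your argument is correct and follows the paper's proof essentially line by line: both bound $\|\mcA\|_{\RR^N}$ via duality, continuity of $A^h$ in $\tn\cdot\tn_h$, the inverse estimate~\eqref{eq:inverse-estimate-Ah}, and the mass-matrix scaling~\eqref{eq:mass-matrix-scaling} to obtain $h^{d-2}$, and both bound $\|\mcA^{-1}\|_{\RR^N}$ via~\eqref{eq:mass-matrix-scaling}, the Poincar\'e estimate~\eqref{eq:discrete-poincare-Ah}, coercivity, and Cauchy--Schwarz to obtain $h^{-d}$. The only cosmetic difference is that you spell out the continuity step (and its dependence on Lemmas~\ref{lem:ghost-penalty-bulk} and~\ref{lem:inverse-estimate-for-conormal-flux}) more explicitly than the paper does; your symmetric reading of the rescaling in~\eqref{eq:system-matrix} as acting on the surface component of \emph{both} arguments is also the one needed for the coercivity step to go through.
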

\begin{proof} We need to bound $\| \mcA \|_{\RR^N}$ and $\| \mcA^{-1} \|_{\RR^N}$. 
  To derive a bound for $\| \mcA \|_{\RR^N}$, 
  we first use the inverse estimate~\eqref{eq:inverse-estimate-Ah}
  and equivalence~\eqref{eq:mass-matrix-scaling} to find that
$\foralls w \in V^h$,
\begin{equation}
\tn (\wb, h^{\onehalf} \ws) \tn_h 
\lesssim h^{-1} \| (\wb, \ws) \|_{\mcTb^h \times \mcTs^h}
\lesssim h^{(d-2)/2}\|W\|_{\RR^N}.
\end{equation}
Then
\begin{align}
  \| \mcA V\|_{\RR^N} &= \sup_{W \in \RR^N } 
  \frac{( \mcA V, W)_{\RR^N}}{\| W \|_{\RR^d}}
  = \sup_{w \in V_h }  \frac{\widetilde{A}^h(v,w)}{\tn(\wb, h^{\onehalf} \ws)\tn_h}
  \frac{\tn (\wb, h^{\onehalf} \ws) \tn_h}{\| W \|_{\RR^N}}
\\
  &\lesssim h^{(d-2)/2} \tn (\vb, h^{\onehalf}\vs) \tn_h 
\lesssim h^{d-2}\| V \|_N,
\end{align}
and thus by the definition of the operator norm,
$
\| \mcA \|_{\RR^N} \lesssim h^{d-2}
$.
Next we turn to the estimate of $\| \mcA^{-1}\|_{\RR^N}$.
Starting from \eqref{eq:mass-matrix-scaling} and combining the Poincar\'e
inequality~\eqref{eq:discrete-poincare-Ah} with the stability
estimates~\eqref{eq:coercivity-Ahb} and
a Cauchy Schwarz inequality, we arrive at the following chain of
estimates:
\begin{align}
  \| V \|^2_{\RR^N} 
  &\lesssim h^{-d} \| (\vb, \vs) \|^2_{\Omega^h\times\Gamma^h} 
  \lesssim h^{-d} \tn (\vb, h^{\onehalf}\vs) \tn_h^2
  \\
  &\lesssim h^{-d} \widetilde{A}_h(v,v) 
  = h^{-d} (V, \mcA V)_{\RR^N}
  \lesssim h^{-d} \| V \|_{\RR^N} \| \mcA V \|_{\RR^N},
\end{align}
and hence $\| V \|_{\RR^N} \lesssim h^{-d}\| \mcA V\|_{\RR^N}$. 
Now setting $ V = \mcA^{-1} W$ we conclude that
 $
 \| \mcA^{-1}\|_{\RR^N} \lesssim h^{-d}
 $
and combining the estimates for $\| \mcA\|_{\RR^N}$ and $\| \mcA^{-1}\|_{\RR^N}$ the theorem follows.
\end{proof}
\begin{figure}[htb]
  \begin{center}
    \begin{minipage}[t]{0.45\textwidth}
      \vspace{0pt}
    \includegraphics[width=1.0\textwidth]{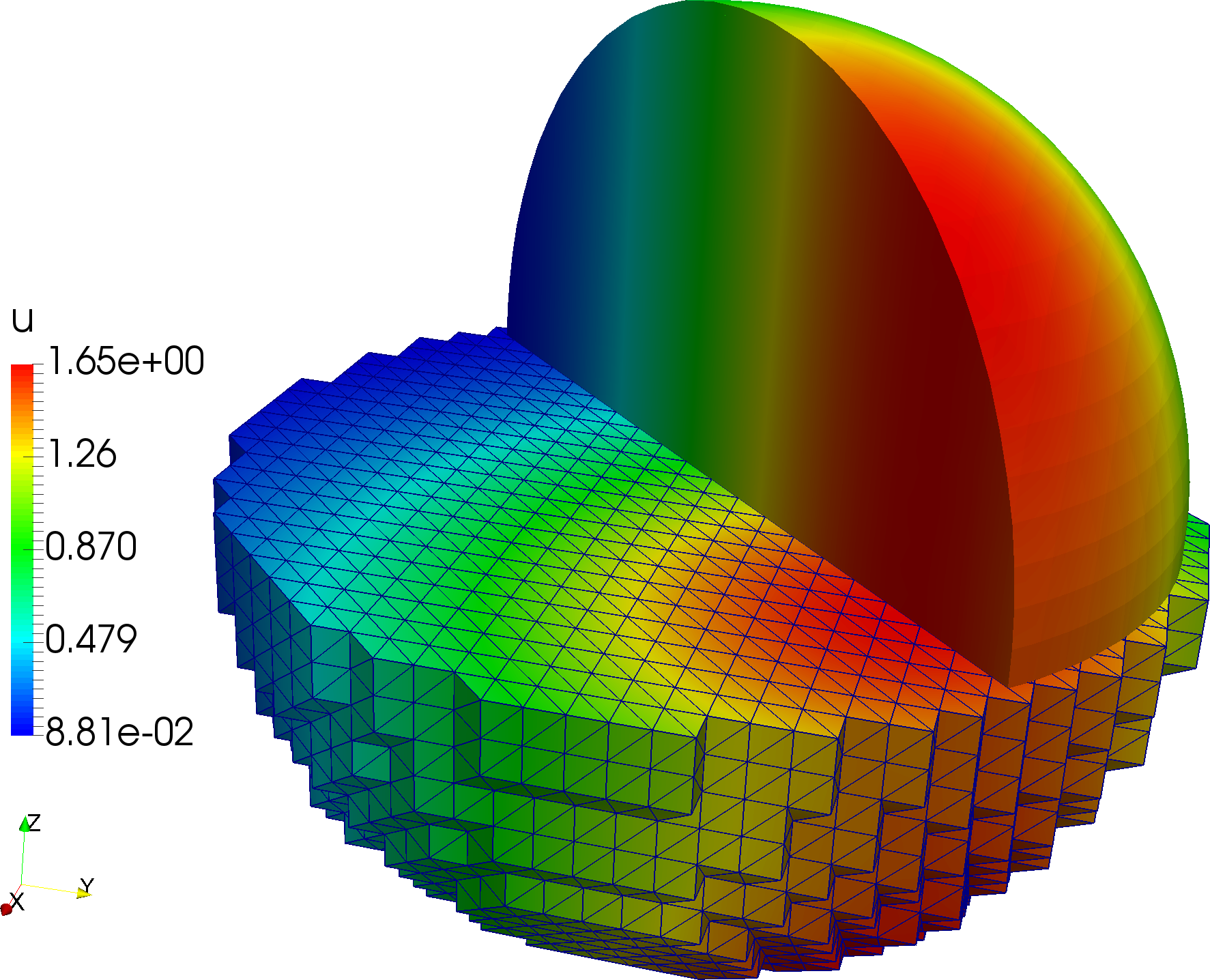}
  \end{minipage}
  \hspace{0.02\textwidth}
    \begin{minipage}[t]{0.45\textwidth}
      \vspace{0pt}
    \includegraphics[width=1.0\textwidth]{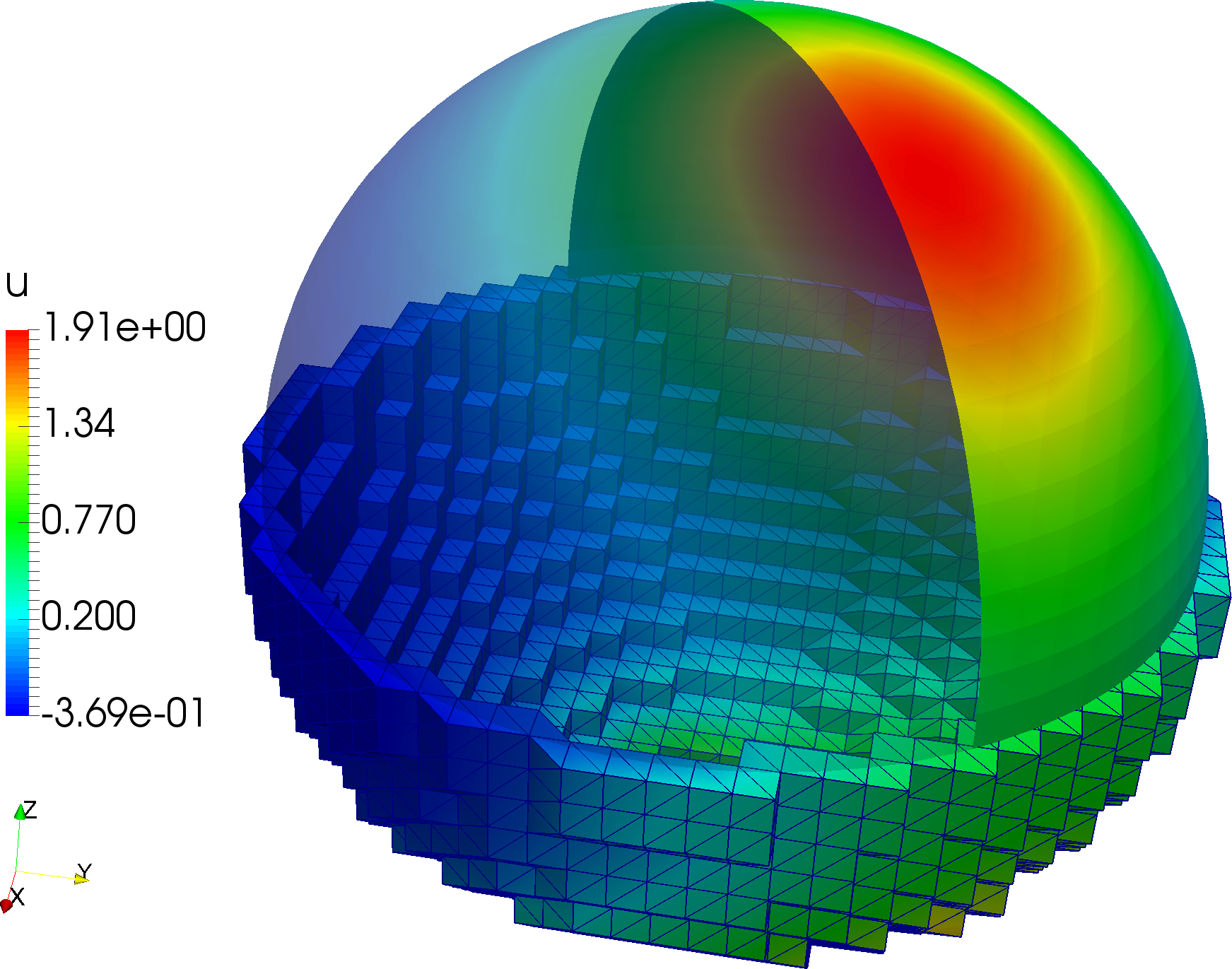}
      \vspace{0pt}
    \end{minipage}
    \caption{Computed solutions for coupled bulk-surface PDE example. 
      The left plot shows the approximate bulk
      solution $\us^h$ as computed on the active mesh $\mcTb^h$,
      together with it restriction to the bulk domain $\Omega^h$.
      The right plot displays the corresponding surface solution
      $\us^h$.}
    \label{fig:solution-example-1}
  \end{center}
\end{figure}

\section{Numerical results}
\subsection{Convergence rate study}
\label{ssec:conv-rate-study}
Following the numerical example presented in~\cite{ElliottRanner2013},
we now examine the convergence properties of the presented cutDG method
for the bulk-surface problem~(\ref{eq:bulk-surface-strong}).
An analytical reference solution is defined by
\begin{align}
  \ub(x, y, z) &= \cs e^{-x(x - 1)y(y-1)}, \\
  \us(x, y, z) &= (\cs + x(1-2x) + y(1-2x))e^{-x(x - 1)y(y - 1)},
\end{align}
with $\cb = \cs = 1$, the corresponding the right-side $f = (\fb,
\fs)$ is computed such that $u = (\us, \ub)$
satisfies~(\ref{eq:bulk-strong})--~(\ref{eq:surface-strong}).
Starting from a structured background mesh $\widetilde{\mcT}_0$ for
$\Omega = [-1.1,1.1]^3$, a sequence of meshes $\{\mcT_k\}_{k=0}^5$ is
generated by successively refining $\widetilde{\mcT}_0$ and extracting
the relevant active background meshes for the bulk and surface problem
as defined by~(\ref{eq:active-mesh-bulk})--(\ref{eq:active-mesh-surface}).
Based on the manufactured exact solution, the experimental order of
convergence (EOC) is calculated by
\begin{align*}
    \text{EOC}(k) = \dfrac{\log(E_{k-1}/E_{k})}{\log(2)}
\end{align*}
with $E_k$ denoting the (norm-dependent) error of the numerical
solution $u_k$ computed at refinement level $k$.
In the present convergence study, both $\| \cdot \|_{H^1(U)}$
and $\|\cdot\|_{L^2(U)}$ for $U \in \{\Omega^h, \Gamma^h\}$
are used to compute $E_k$.
For the completely stabilized cutDG method with
$\gamma_{\Omega} = \gamma_{\Gamma} = 50$,
$\mu_{\Omega} =\mu_{\Gamma} = 50$ and
$\tau_{\Omega} = \tau_{\Gamma} = 0.01$, 
the observed EOC reported in Table~\ref{tab:convergence-rates-sbp-example-1}
(top)
reveals a first-order and second-order
convergence in the $H^1$ and $L^2$ norm, respectively.
Note that for the bulk problem, the standard DG jump penalization term in~(\ref{eq:ah-bulk-def})
scaled with $\gamma_{\Omega}$ is similar to the
lowest order term in the ghost-penalty~(\ref{eq:jbh-def})  scaled with $\mu_{\Omega}$.
Deactivating all solely ghost-penalty related stabilization
by setting $\tau_{\Omega} =\tau_{\Gamma} = \mu_{\Gamma} = 0$
renders the method completely unreliable and thus demonstrates
the necessity to stabilize the presented DG method for the bulk-surface problem
in the unfitted mesh case.
\begin{table}[htb]
  \centering
  \begin{center}
\begin {tabular}{cr<{\pgfplotstableresetcolortbloverhangright }@{}l<{\pgfplotstableresetcolortbloverhangleft }cr<{\pgfplotstableresetcolortbloverhangright }@{}l<{\pgfplotstableresetcolortbloverhangleft }cr<{\pgfplotstableresetcolortbloverhangright }@{}l<{\pgfplotstableresetcolortbloverhangleft }cr<{\pgfplotstableresetcolortbloverhangright }@{}l<{\pgfplotstableresetcolortbloverhangleft }c}%
\toprule $k$&\multicolumn {2}{c}{$\| e^h \|_{H^1(\Omega ^h)}$}&EOC&\multicolumn {2}{c}{$\| e^k \|_{L^2(\Omega ^h)}$}&EOC&\multicolumn {2}{c}{$\| e^k \|_{H^1(\Gamma ^h)}$}&EOC&\multicolumn {2}{c}{$\| e^k \|_{L^2(\Gamma ^h)}$}&EOC\\\midrule %
\pgfutilensuremath {0}&$5.28$&$\cdot 10^{-1}$&--&$8.60$&$\cdot 10^{-2}$&--&$2.17$&$\cdot 10^{0}$&--&$2.73$&$\cdot 10^{-1}$&--\\%
\pgfutilensuremath {1}&$3.44$&$\cdot 10^{-1}$&\pgfutilensuremath {+0.62}&$3.04$&$\cdot 10^{-2}$&\pgfutilensuremath {+1.50}&$1.12$&$\cdot 10^{0}$&\pgfutilensuremath {+0.96}&$7.38$&$\cdot 10^{-2}$&\pgfutilensuremath {+1.89}\\%
\pgfutilensuremath {2}&$1.84$&$\cdot 10^{-1}$&\pgfutilensuremath {+0.90}&$7.34$&$\cdot 10^{-3}$&\pgfutilensuremath {+2.05}&$5.80$&$\cdot 10^{-1}$&\pgfutilensuremath {+0.94}&$1.80$&$\cdot 10^{-2}$&\pgfutilensuremath {+2.04}\\%
\pgfutilensuremath {3}&$9.35$&$\cdot 10^{-2}$&\pgfutilensuremath {+0.98}&$1.83$&$\cdot 10^{-3}$&\pgfutilensuremath {+2.00}&$2.76$&$\cdot 10^{-1}$&\pgfutilensuremath {+1.07}&$4.63$&$\cdot 10^{-3}$&\pgfutilensuremath {+1.96}\\%
\pgfutilensuremath {4}&$4.71$&$\cdot 10^{-2}$&\pgfutilensuremath {+0.99}&$4.66$&$\cdot 10^{-4}$&\pgfutilensuremath {+1.98}&$1.39$&$\cdot 10^{-1}$&\pgfutilensuremath {+0.99}&$1.07$&$\cdot 10^{-3}$&\pgfutilensuremath {+2.12}\\\bottomrule %
\end {tabular}%

    \\[2ex]
\begin {tabular}{cr<{\pgfplotstableresetcolortbloverhangright }@{}l<{\pgfplotstableresetcolortbloverhangleft }cr<{\pgfplotstableresetcolortbloverhangright }@{}l<{\pgfplotstableresetcolortbloverhangleft }cr<{\pgfplotstableresetcolortbloverhangright }@{}l<{\pgfplotstableresetcolortbloverhangleft }cr<{\pgfplotstableresetcolortbloverhangright }@{}l<{\pgfplotstableresetcolortbloverhangleft }c}%
\toprule $k$&\multicolumn {2}{c}{$\| e^h \|_{H^1(\Omega ^h)}$}&EOC&\multicolumn {2}{c}{$\| e^k \|_{L^2(\Omega ^h)}$}&EOC&\multicolumn {2}{c}{$\| e^k \|_{H^1(\Gamma ^h)}$}&EOC&\multicolumn {2}{c}{$\| e^k \|_{L^2(\Gamma ^h)}$}&EOC\\\midrule %
\pgfutilensuremath {0}&$7.27$&$\cdot 10^{-1}$&--&$1.32$&$\cdot 10^{-1}$&--&$5.38$&$\cdot 10^{0}$&--&$1.16$&$\cdot 10^{0}$&--\\%
\pgfutilensuremath {1}&$8.88$&$\cdot 10^{-1}$&\pgfutilensuremath {-0.29}&$1.99$&$\cdot 10^{-1}$&\pgfutilensuremath {-0.59}&$8.46$&$\cdot 10^{0}$&\pgfutilensuremath {-0.65}&$1.82$&$\cdot 10^{0}$&\pgfutilensuremath {-0.65}\\%
\pgfutilensuremath {2}&$1.14$&$\cdot 10^{0}$&\pgfutilensuremath {-0.36}&$2.72$&$\cdot 10^{-1}$&\pgfutilensuremath {-0.45}&$1.02$&$\cdot 10^{2}$&\pgfutilensuremath {-3.59}&$2.60$&$\cdot 10^{0}$&\pgfutilensuremath {-0.51}\\%
\pgfutilensuremath {3}&$1.01$&$\cdot 10^{0}$&\pgfutilensuremath {+0.17}&$2.51$&$\cdot 10^{-1}$&\pgfutilensuremath {+0.11}&$1.87$&$\cdot 10^{1}$&\pgfutilensuremath {+2.44}&$2.25$&$\cdot 10^{0}$&\pgfutilensuremath {+0.21}\\\bottomrule %
\end {tabular}%

  \end{center}
  \caption{Experimental order of convergence for the bulk-surface problem with
  DG-stabilization parameters
  $\gamma_{\Omega} = \gamma_{\Gamma} = 50$.
  (Top) Optimal convergence rates
  are obtained for completely activated ghost-penalties using
    $\mu_{\Omega} = \mu_{\Gamma} = 50$ and
    $\tau_{\Omega} = \tau_{\Gamma} = 0.01$.
    (Bottom) After 
    deactivation of the ghost penalties by setting
    $\mu_{\Gamma} = \tau_{\Omega} = \tau_{\Gamma} = 0$,
    the convergence rate deteriorates completely and no
    clear trend is observable.
  }
  \label{tab:convergence-rates-sbp-example-1}
\end{table}

\subsection{Condition number study}
In the second numerical experiment, we study the sensitivity
of the condition number of the system matrix defined
by~\eqref{eq:system-matrix}
with respect to relative positioning of $\Gamma$
within the background mesh $\mcT_h$.
Starting from the set-up described in Section~\ref{ssec:conv-rate-study}
and choosing refinement level $k=1$,
a family of surfaces
$\{\Gamma_{\delta}\}_{0\leqslant\delta\leqslant 1}$
is generated 
by translating the unit-sphere $S^2 = \{ x \in \RR^3 : \| x \| = 1 \}$
along the diagonal $(h,h,h)$; that is,
$\Gamma_{\delta} = S^2 + \delta_0 (h,h,h)$ with $\delta \in [0,1]$.
Figure~\ref{fig:condition-number-sbp-set-up} illustrates the experimental
set-up.
\begin{figure}[htb]
  \begin{center}
    \begin{minipage}[c]{0.45\linewidth}
      \vspace{0pt}
      \includegraphics[width=1.0\textwidth]{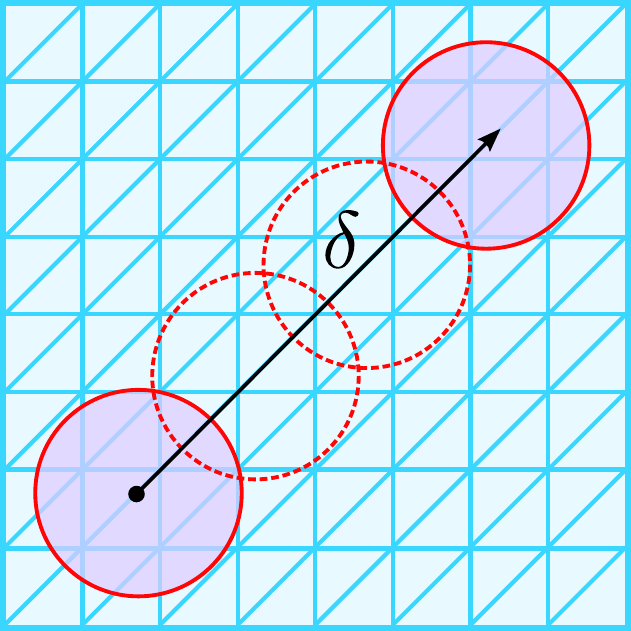}
    \end{minipage}
    \hspace{0.03\linewidth}
    \begin{minipage}[c]{0.5\linewidth}
      \vspace{0pt}
      \includegraphics[width=1.0\textwidth]{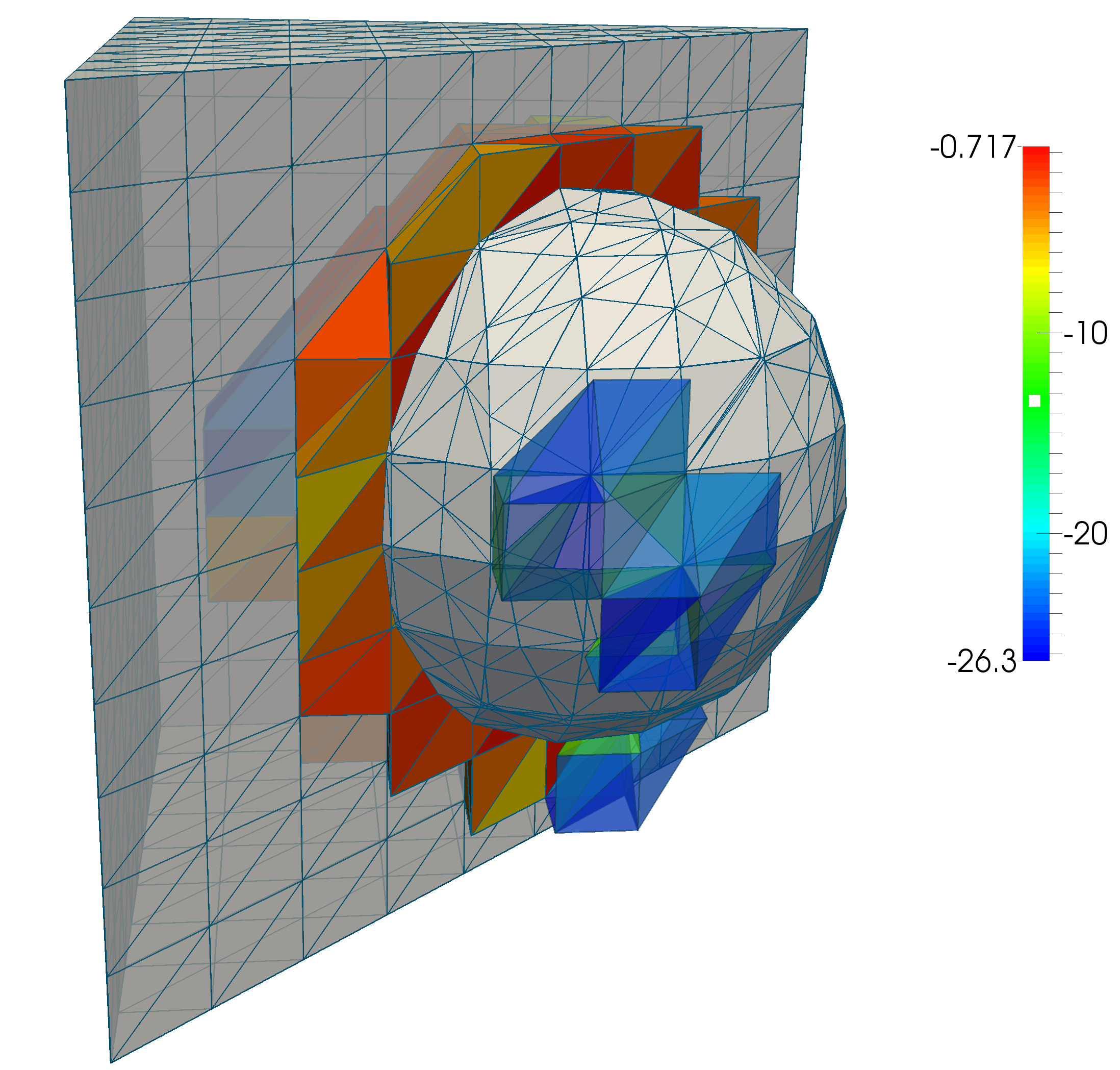}
    \end{minipage}
  \end{center}
  \caption{(Left) Principal experimental set-up to study the sensitivity  of the conditon number
    with respect to the relative $\Gamma$ position.
    (Right): Snapshot of an intersection configuration when moving $\Gamma$
    through the background mesh.
    To visualize ``extreme'' cut configurations,
    the color map plots
    for each intersected mesh element $T$ the value of $\log(\Gamma^h \cap T/\diam(T)^2)$.
    Thus blue-colored elements contain only an extremely small fraction of the surface.
  }
    \label{fig:condition-number-sbp-set-up}
\end{figure}
For $\delta = l/500$, $l=0,\ldots,500$, we compute the condition
number $\kappa_{\delta}(\mcA)$ as the ratio of the absolute value of
the largest (in modulus) and smallest (in modulus), non-zero
eigenvalue.
The resulting condition numbers are displayed 
in Figure~\ref{fig:condition-number-sbp} as a function of $\delta$.
Choosing the stabilization parameters as in the convergence study for
the fully stabilized cutDG method,
we observe that the position of $\Gamma$ relative to the background mesh
$\mcT_k$ has very little effect on the condition number.
After turning off either of the bulk and surface related cutFEM stabilizations,
the condition number is highly sensitive to the relative position of $\Gamma$ and clearly
unbounded as a function of $\delta$.
\begin{figure}[htb]
  \begin{center}
    \includegraphics[width=0.900\textwidth]{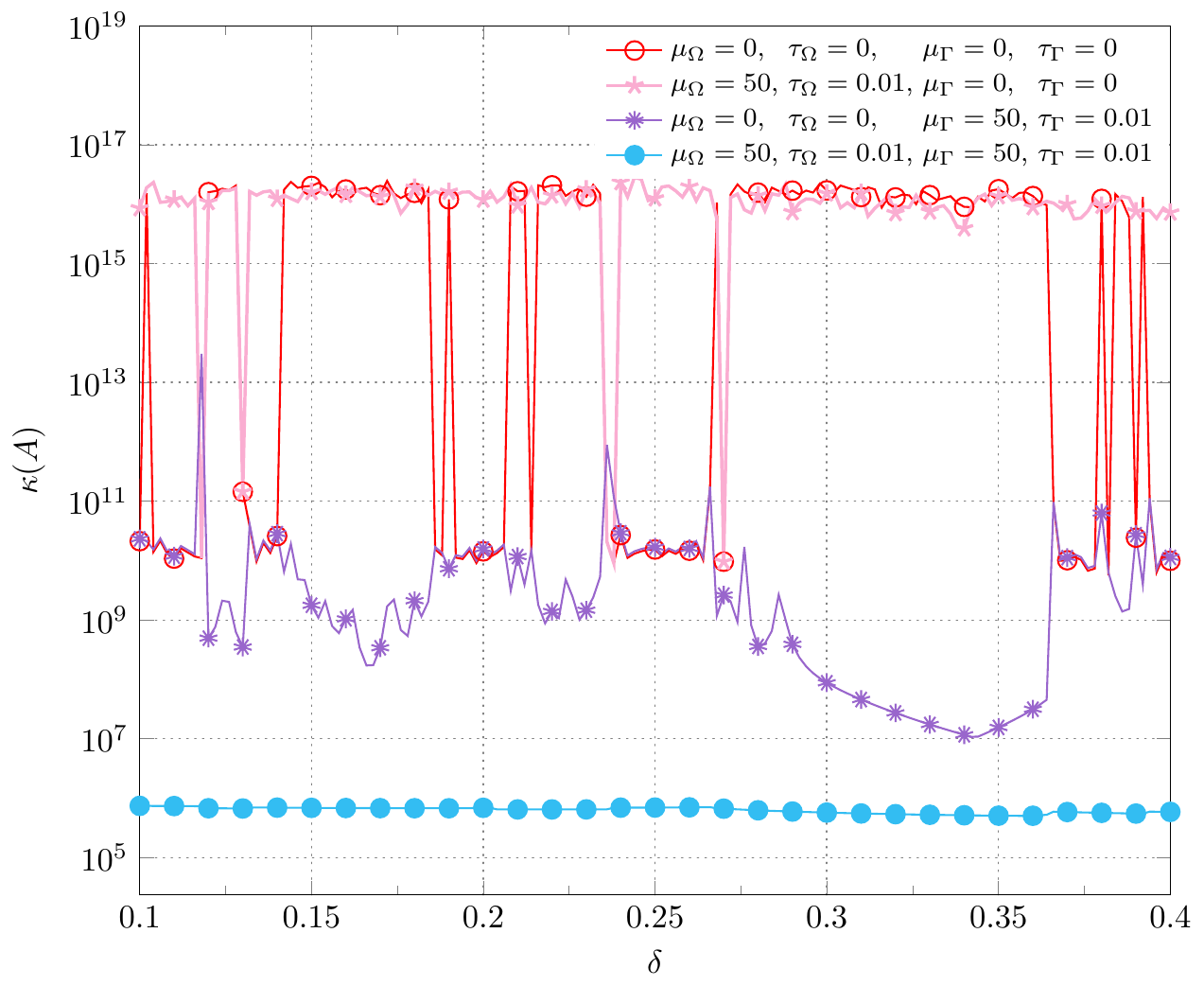}
    \caption{Condition numbers plotted as a function of the position parameter $\delta$.
      When turning off either the surface or bulk related ghost-penalties (or both),
      the condition number
      is highly sensitive to the relative surface positioning in the background mesh.
      With all ghost penalties activated, the condition number is
      completely robust.
  }
    \label{fig:condition-number-sbp}
  \end{center}
\end{figure}

\section{Acknowledgments}
This work was supported in part by the Kempe foundation (JCK-1612).
The author expresses his gratitude to Ceren G\"urkan for her help with the set-up of the convergence
experiment, to Erik Burman for his great editorial assistance during the preparation of this
contribution, and finally, to the two anonymous referees for
their valuable comments and suggestions.

\bibliographystyle{plainnat}
\bibliography{bibliography}

\end{document}